\newtheorem{theorem}{Theorem}[section]
\newtheorem{proposition}[theorem]{Proposition}
\newtheorem{lemma}[theorem]{Lemma}
\newtheorem*{claim*}{Claim}
\newtheorem{corollary}[theorem]{Corollary}
\newtheorem{Main Conjecture}[theorem]{Main Conjecture}
\theoremstyle{definition}
\newtheorem{definition}[theorem]{Definition}
\newtheorem{example}[theorem]{Example}
\theoremstyle{remark}
\newtheorem{remark}[theorem]{Remark}
\theoremstyle{plain}
\newcommand{\cellsize}{19}
\newlength{\cellsz} \setlength{\cellsz}{\cellsize\unitlength}
\newsavebox{\cell}
\sbox{\cell}{\begin{picture}(\cellsize,\cellsize)
\put(0,0){\line(1,0){\cellsize}}
\put(0,0){\line(0,1){\cellsize}}
\put(\cellsize,0){\line(0,1){\cellsize}}
\put(0,\cellsize){\line(1,0){\cellsize}}
\end{picture}}
\newcommand\cellify[1]{\def\thearg{#1}\def\nothing{}%
\ifx\thearg\nothing
\vrule width0pt height\cellsz depth0pt\else
\hbox to 0pt{\usebox{\cell} \hss}\fi%
\vbox to \cellsz{
\vss
\hbox to \cellsz{\hss$#1$\hss}
\vss}}
\newcommand\tableau[1]{\vtop{\let\\\cr
\baselineskip -16000pt \lineskiplimit 16000pt \lineskip 0pt
\ialign{&\cellify{##}\cr#1\crcr}}}
\DeclareMathOperator{\linear}{Lin}
\DeclareMathOperator{\decomp}{Dec}
\newcommand{\excise}[1]{}
\begin{document}
\pagestyle{plain}
\title{Presenting the cohomology of a Schubert variety:\\ Proof of the minimality conjecture}

\author{Avery St.~Dizier}
\address{Dept.~of Mathematics, Michigan State University, East Lansing, MI 48824, USA} 
\email{stdizier@msu.edu}

\author{Alexander Yong}
\address{Dept.~of Mathematics, U.~Illinois at Urbana-Champaign, Urbana, IL 61801, USA} 
\email{ayong@illinois.edu}

\date{August 31, 2023}

\begin{abstract} 
A minimal presentation of the cohomology ring of the flag manifold $GL_n/B$ was given in [A.~Borel, 1953]. This presentation was extended 
by [E.~Akyildiz--A.~Lascoux--P.~Pragacz, 1992] to a non-minimal one for all Schubert varieties. Work of [V.~Gasharov--V.~Reiner, 2002] gave a short, \emph{i.e.} polynomial-size,
presentation for a subclass of Schubert varieties that includes the smooth ones. In [V.~Reiner--A.~Woo--A.~Yong, 2011], a general shortening was found; it implies an exponential upper bound of $2^n$ on the number of generators required. That work states
a minimality
conjecture whose significance would be an exponential lower bound of $\frac{\sqrt{2}^{n+2}}{\sqrt{\pi n}}$ on the number
of generators needed in worst case, giving the first obstructions to short presentations. We prove the minimality conjecture. Our proof uses the Hopf algebra structure of the ring of symmetric functions.
\end{abstract}

\maketitle

\section{Introduction}

\subsection{Background} Let $X=Fl_n({\mathbb C})$ be the \emph{complete flag manifold}; its points are complete flags of subspaces of ${\mathbb C}^n$,
\[F_{\bullet}=\{\langle \vec 0\rangle\subset F_1\subset F_2\subset \cdots\subset F_{n-1}\subset {\mathbb C}^n\},\]
where $F_i$ is a $i$-dimensional linear subspace of ${\mathbb C}^n$. 
In 1953, A.~Borel \cite{Borel} gave a presentation of its integral cohomology ring:
\[H^{*}(X)\cong {\mathbb{Z}}[x_1,\ldots,x_n]/I^{S_n},\]
where $I^{S_n}$ is the ideal generated by symmetric polynomials of positive degree. A feature of Borel's presentation is its \emph{shortness}: the ideal $I^{S_n}$ is generated by the elementary symmetric polynomials
$e_{i}(x_1,\ldots,x_n)$, $1\leq i\leq n$. This exhibits that $H^*(X)$ is a complete intersection, with $n$ generators and $n$ relations.

Let $GL_n$ be the Lie group of invertible $n\times n$ matrices and $B$ its Borel subgroup of invertible upper triangular matrices. $GL_n$ acts transitively on $X$ 
whereas $B$ is the stabilizer of the standard basis flag given by $F_i:={\rm Span} \ \{\vec e_1, \vec e_2,\ldots, \vec e_i\}$. By the orbit-stabilizer theorem, $X$ may be
identified (topologically) with $GL_n/B$. The finitely many $B$-orbits of $X$ are called the \emph{Schubert cells} $X_w^{\circ}$ and are indexed by permutations $w$ in the symmetric group $S_n$ of permutations of $\{1,2,\ldots,n\}$. Each cell is isomorphic to the affine space
${\mathbb C}^{\ell(w)}$. Together, they form a CW-decomposition for $X$ where each cell has even real dimension.
Their closures, the \emph{Schubert varieties} 
\[X_w\coloneqq \overline{X_w^{\circ}},\] 
provide a ${\mathbb Z}$-basis of the integral homology $H_*(X)$ and their
Poincar\'e duals 
$\sigma_w=[X_w]^*$ 
give a ${\mathbb Z}$-basis for the integral cohomology ring $H^*(X)$.

The \emph{Bruhat decomposition} is
\[X_w=\coprod_{u\leq w} X_u^{\circ},\]
where $u\leq w$ refers to \emph{(strong) Bruhat order} on $S_n$. Thus, $X_w$ inherits a CW-decomposition from $Fl_n$. The map on cohomology
\[H^*(X)\to H^*(X_w)\]
that is induced by the inclusion of $X_w$ in $Fl_n$ is a surjection with kernel 
\begin{equation}
\label{eqn:apriorigens}
I_w\coloneqq \mathrm{Span}_\mathbb{Z} \left\{\sigma_u\mid u\not\leq w\right\};
\end{equation}
see \cite{RWY}. Therefore one obtains a Borel-type presentation
\[H^*(X_w)\cong H^*(X)/I_w
.\]

The list of generators for $I_w$ given in (\ref{eqn:apriorigens}) is quite redundant in general since they span $I_w$ \emph{linearly}. What are more efficient lists of generators for $I_w$?

In 1992, E.~Akyildiz--A.~Lascoux--P.~Pragacz \cite{ALP} made the first step towards minimizing the generators of $I_w$. To state their result, define a \emph{grassmannian} permutation to be $u\in S_n$ with a unique descent, 
\emph{i.e.}, a position $k$ such that $u(k)>u(k+1)$.

\begin{theorem}[{\cite[Theorem~2.2]{ALP}}]\label{thm:ALP}
For any $w\in S_n$, the ideal $I_w$ defining $H^*(X_w)$ as a quotient of $H^*(X)$ is generated
by the cohomology classes $\sigma_u$ where $u\not\leq w$ and $u$ is grassmannian.
\end{theorem}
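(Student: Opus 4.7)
Let $J_w$ denote the ideal of $H^*(X)$ generated by the grassmannian Schubert classes $\sigma_v$ with $v\not\le w$. From~\eqref{eqn:apriorigens}, $J_w\subseteq I_w$, so the task is the reverse containment: show that $\sigma_u\in J_w$ for every $u\not\le w$.

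The combinatorial input I would use is the \emph{tableau criterion} for Bruhat order: $u\le w$ in $S_n$ if and only if for every $k\in\{1,\dots,n-1\}$ one has $u^{(k)}\le w^{(k)}$, where $u^{(k)}$ denotes the unique permutation with single descent at $k$ whose sorted first~$k$ one-line values are those of $u$. Given $u\not\le w$, fix $k$ with $v:=u^{(k)}\not\le w^{(k)}$; then $v$ is grassmannian, a generator of $J_w$, and $v\le u$ because $v$ is the minimal-length coset representative of $u\cdot(S_k\times S_{n-k})$.

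The key step is to establish $\sigma_u\in(\sigma_v)\subseteq J_w$. I would exploit the parabolic factorization $u=v\cdot\bar u$ with $\bar u\in S_k\times S_{n-k}$ and $\ell(u)=\ell(v)+\ell(\bar u)$, together with the Schubert expansion
\[
\sigma_v\cdot\sigma_{\bar u}=\sum_{u''}c^{u''}_{v,\bar u}\,\sigma_{u''}.
\]
A standard property of the structure constants in $H^*(GL_n/B)$ is that $c^{u''}_{v,\bar u}\ne 0$ forces $u''\ge v$ in Bruhat order, while $c^{u}_{v,\bar u}>0$ accounts for the canonical factor. Since $v\not\le w$, each such $u''$ also satisfies $u''\not\le w$, and rearrangement gives
\[
\sigma_u=\sigma_v\cdot\sigma_{\bar u}\,-\sum_{u''\ne u}c^{u''}_{v,\bar u}\,\sigma_{u''},
\]
whose leading term lies in $(\sigma_v)\subseteq J_w$.

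The principal obstacle is the error terms $\sigma_{u''}$: they share the length of $\sigma_u$, so induction on $\ell$ alone is unavailable. My intended resolution is to induct on a total refinement of Bruhat order among permutations of fixed length---say the reverse-lexicographic order on one-line notations---verifying that for a judicious choice of $k$ every error term produced by the canonical parabolic factorization is strictly smaller than $u$ in this order, so the recursion terminates. An alternative is to work within the Leray--Hirsch decomposition of $H^*(X)$ along $\pi_k\colon X\to GL_n/P_k$, in which $\sigma_u$ can be refactored through $\sigma_v$ via a corrected (non-Schubert) fiber-cohomology class without producing error Schubert terms at all; identifying this clean factorization is delicate but bypasses the inductive bookkeeping.
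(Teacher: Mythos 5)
First, note that the paper does not prove this statement at all: Theorem~\ref{thm:ALP} is quoted as background from \cite[Theorem~2.2]{ALP}, so your argument can only be judged on its own merits, not against an in-paper proof.

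On the merits, your outline has the right skeleton (the tableau criterion gives a grassmannian $v=u^{(k)}\le u$ with $v\not\le w$, and one wants to peel $\sigma_u$ off of $\sigma_v\cdot\sigma_{\bar u}$ using the length-additive factorization $u=v\bar u$, $\bar u\in S_k\times S_{n-k}$), but the two claims that carry all the weight are exactly the ones left unproved. (i) You assert only $c^{u}_{v,\bar u}>0$, yet your rearranged identity silently uses $c^{u}_{v,\bar u}=1$; since $I_w$ is an ideal of \emph{integral} cohomology, positivity is not enough --- you need the precise statement that for $v$ a minimal coset representative and $x\in S_k\times S_{n-k}$ with lengths adding, $\sigma_v\sigma_x$ contains $\sigma_{vx}$ with coefficient exactly $1$, and in fact that no \emph{other} term of the product lies in the coset $vigg(S_k\times S_{n-k}\bigr)$ is needed as well. (ii) The recursion on the error terms $\sigma_{u''}$ is not shown to terminate: these have the same degree as $\sigma_u$, and your proposed fix --- a reverse-lexicographic total order on one-line notation, decreasing for ``a judicious choice of $k$'' --- is neither made precise nor verified, and there is no evidence it decreases. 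The workable bookkeeping is different: fix $k$ once, and induct \emph{downward} on $\ell(u^{(k)})$, using the refined triangularity lemma that every error term $u''$ satisfies $(u'')^{(k)}\ge u^{(k)}$ with $\ell\bigl((u'')^{(k)}\bigr)>\ell\bigl(u^{(k)}\bigr)$ (so $(u'')^{(k)}\not\le w^{(k)}$ and the induction applies). That lemma, together with the coefficient-one statement, is precisely the nontrivial content of the theorem; it is what \cite{ALP} (and arguments in the style of \cite{Gasharov.Reiner}) establish via the structure of $H^*(GL_n/B)$ as a module over $H^*(GL_n/P_k)$ for the fibration $GL_n/B\to GL_n/P_k$ --- essentially your second, ``Leray--Hirsch'' alternative, which you also leave as a sketch. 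As written, the proposal identifies the obstacle but does not overcome it, so it is an outline with the essential step missing rather than a proof.
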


In 2002, V.~Gasharov--V.~Reiner \cite{Gasharov.Reiner} showed that when $X_w$
is \emph{defined by inclusions} (a family of Schubert varieties that include the smooth ones) then $I_w$ can be generated by $n^2$ many generators. In addition, for the subclass of \emph{Ding's Schubert varieties} \cite{Ding1,Ding2}, they offered an even smaller generating set consisting of $n$ generators; an application is
given by M.~Develin--J.~Martin--V.~Reiner \cite{DMR}.

Can one always give such a ``short'' presentation of
$H^*(X_w)$? Formally, one asks:

\begin{center}
\emph{Is it always possible to generate $I_w$ by $O({\sf poly}(n))$ many generators?}
\end{center}

In 2011, V.~Reiner, A.~Woo, and the second author \cite{RWY} further refined Theorem~\ref{thm:ALP}. Complementing this result was a minimality conjecture which, if true, implies a 
negative answer to the above question. That is, there is a family of Schubert varieties $X_{w^{(n)}}$ for which $I_{w^{(n)}}$ requires exponentially many generators.

\subsection{The main result} 
The goal of this paper is to prove the minimality conjecture of \cite{RWY}.

A useful way to think about Bruhat order was introduced by A.~Lascoux and M.-P.~Schutzenberger \cite{LS}.
They show there is a ``base'' $B\subset S_n$ which is minimal with respect to set-theoretic inclusion such that the
map 
\[\varphi: (S_n,\leq ) \to (2^B,\subseteq), \ u\mapsto \{b\in B| b\leq u\}\]
is a poset isomorphism of $(S_n,\leq )$ with its image. In fact, $B$ consists of the \emph{bigrassmannian} permutations, namely, those $v\in S_n$ such that both $v$ and $v^{-1}$ are grassmannian. 

The bigrassmannian permutations (besides the identity) are indexed by integers $r,s,t$ such that
$1\leq t\leq r,s\leq n$ and $t>r+s-n$. Let $v_{r,s,t,n}\in S_n$ be the bigrassmannian permutation uniquely
characterized by having unique descent at $r$, $v^{-1}_{r,s,t,n}$ having descent at $s$ and $v_{r,s,t,n}(t)=s+1$. 
In one-line notation, one explicitly has
\begin{multline}\nonumber
	v_{r,s,t,n}=1,2,\ldots,t-1,\ s+1,s+2,\ldots,s+r-t+1,\\
	t,t+1,\ldots,s, \
	s+r-t+2,s+r-t+3,\ldots,n;
\end{multline}
see \cite[Lemma~4.1]{RWY}.

For each basic element of $S_n$, a bigrassmannian permutation
$v$, define the \emph{basic ideal} of $H^*(X)$ as
\[J_v\coloneqq \mathrm{Span}_\mathbb{Z}\left\{\sigma_u\mid u\geq v\right\}.\]
It is shown in \cite{RWY} that $I_w$ decomposes into basic ideals. Specifically,
\begin{equation}
\label{eqn:thedecompabc}
I_w=\sum_{v\in {\mathcal E}(w)} J_v,
\end{equation}
where ${\mathcal E}(w)$ is the set of $u \in S_n$ which are minimal in the Bruhat order among those not
below $w$. The set ${\mathcal E}(w)$ is referred to as the \emph{essential set}, and consists only of bigrassmannian permutations \cite[Theorem 1.1]{RWY}.

As explained in \cite[Section~3]{RWY}, the projection 
\[X\to \mathrm{Gr}_{r,n}\]
onto the Grassmannian of $r$-planes in $\mathbb{C}^n$
induces a split-inclusion $H^*(\mathrm{Gr}_{r,n})\xhookrightarrow{\iota} H^*(X)$. Consequently (see \cite[Proposition 3.1]{RWY}), elements of  $H^*(\mathrm{Gr}_{r,n})$ generate $J_v$ in $H^*(X)$ if and only if they generate the contraction $J_v\cap H^*(\mathrm{Gr}_{r,n})$ inside $H^*(\mathrm{Gr}_{r,n})$.

In particular, elements of $H^*(\mathrm{Gr}_{r,n})$ \emph{minimally} generate $J_v$ in $H^*(X)$ if and only if they \emph{minimally} generate $J_v\cap H^*(\mathrm{Gr}_{r,n})$ in $H^*(\mathrm{Gr}_{r,n})$. This allows one to study the generation of $J_v$ inside $H^*(\mathrm{Gr}_{r,n})$ instead of $H^*(X)$ without loss of generality, enabling the use of symmetric function theory.


Let $\Lambda=\Lambda(x_1,x_2,\ldots)$ be the ring of symmetric functions with integral coefficients. This space has a ${\mathbb Z}$-linear basis consisting of \emph{Schur functions} $s_{\lambda}$,
where $\lambda$ is an integer partition. Consider the ideal 
\[I_{r,n-r} = \mathrm{Span}_\mathbb{Z}\left\{s_\lambda\mid \lambda\nsubseteq (n-r)^r \right\} \]
of $\Lambda$. Then one has the following standard presentation of the cohomology of Grassmannians:
\[H^*(\mathrm{Gr}_{r,n})\cong \Lambda/I_{r,n-r}. \]

We will use an overline to denote the projection $\Lambda\to\Lambda/I_{r,n-r}$. Specifically, we will write $\overline{\Lambda}$ as a shorthand for $\Lambda/I_{r,n-r}$. We will also distinguish between $s_{\lambda}\in \Lambda$, the usual Schur function, and $\overline{s}_{\lambda}$ for its image in $\overline{\Lambda}$. 

We resolve the minimality conjecture \cite[Conjecture~4.10]{RWY} about the basic ideals $J_v$:

\begin{theorem}\label{thm:main}
Given a bigrassmannian permutation $v=v_{r,s,t,n}\in S_n$ let
\begin{align*}
i&\coloneqq s-t+1,\\
j&\coloneqq r-t+1,\\
a&\coloneqq \min(n-r-i,r-j),\\
b&\coloneqq \min(i,j).
\end{align*}
Then the basic ideal $J_v$, thought of as an ideal of $H^*(\mathrm{Gr}_{r,n})\cong \Lambda/I_{r,n-r}$, is minimally generated by
\begin{equation}
\label{eqn:minimallist}
\{\overline{s}_{\mu} \mid i^j\subseteq \mu \subseteq (i^j,b^a)\}.
\end{equation}
\end{theorem}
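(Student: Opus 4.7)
The plan is to work in $\overline{\Lambda} = \Lambda/I_{r,n-r} \cong H^*(\mathrm{Gr}_{r,n})$ via the reduction in \cite[Sec.~3]{RWY}, where $J_v$ becomes the $\mathbb{Z}$-span of $\{\overline{s}_\lambda : \lambda \supseteq i^j\}$. Denote the proposed generating set by $\mathcal{S} = \{\mu : i^j \subseteq \mu \subseteq (i^j, b^a)\}$; its elements have the form $\mu = (i^j, \alpha)$ with $\alpha \subseteq b^a$. A key preliminary is the coefficient identity
\[
c^{(i^j,\alpha)}_{(i^j,\beta),\,\nu} = c^{\alpha}_{\beta,\,\nu} \quad \text{for all } \beta \subseteq \alpha \text{ and all } \nu,
\]
which holds because the skew shape $(i^j,\alpha)/(i^j,\beta)$ is combinatorially a translate of $\alpha/\beta$. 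This reduces the relevant Littlewood--Richardson coefficients to LR coefficients on the smaller partitions $\alpha, \beta \subseteq b^a$.

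For generation, I would show every $\overline{s}_\lambda$ with $\lambda \supseteq i^j$ lies in $\mathcal{I}_\mathcal{S} \coloneqq (\overline{s}_\mu : \mu \in \mathcal{S})$ by induction on $|\lambda|$ (with an appropriate secondary induction within each degree). If $\lambda \in \mathcal{S}$ we are done; otherwise $\lambda$ violates one of the three conditions defining $\mathcal{S}$: $\lambda_1 > i$, or $\lambda$ has a row of length $> b$ below the $i^j$-block, or $\lambda$ has more than $j+a$ parts. In each case, Pieri's rule (multiplying by $\overline{s}_{(p)}$ or $\overline{s}_{(1^p)}$) combined with the vanishing of $\overline{s}_{\lambda'}$ for $\lambda' \not\subseteq (n-r)^r$ lets one express $\overline{s}_\lambda$ as a product $\overline{s}_\mu \cdot \overline{s}_{(p)}$ (plus corrections handled by the secondary induction) with $\mu \supsetneq i^j$ and $|\mu| < |\lambda|$, hence in $\mathcal{I}_\mathcal{S}$ by the outer induction hypothesis. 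Case analysis exploiting $a = \min(n-r-i, r-j)$ and $b = \min(i,j)$ makes this termination argument go through.

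For minimality, fix $\mu_0 \in \mathcal{S}$. I would construct a linear functional $L_{\mu_0}$ on $\overline{\Lambda}$ with $L_{\mu_0}(\overline{s}_{\mu_0})=1$, $L_{\mu_0}(\overline{s}_\mu)=0$ for $\mu \in \mathcal{S} \setminus \{\mu_0\}$, and $L_{\mu_0}$ vanishing on $\mathfrak{m} \cdot J_v$ (with $\mathfrak{m}$ the augmentation ideal of $\overline{\Lambda}$). Such a functional distinguishes $\overline{s}_{\mu_0}$ from the remaining generators in $J_v/\mathfrak{m} J_v$, so by Nakayama's lemma $\overline{s}_{\mu_0}$ is indispensable in any generating set. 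I would realize $L_{\mu_0}(x) = \langle x, g_0\rangle$ for the Hall inner product and some $g_0 \in I_{r,n-r}^\perp \subset \Lambda$. Via the adjointness $\langle xy, g_0\rangle = \langle x \otimes y, \Delta g_0\rangle$, the vanishing on $\mathfrak{m} J_v$ becomes the coproduct constraint $\Delta g_0 - g_0 \otimes 1 \in J_v^\perp \otimes \Lambda$. Here the Hopf algebra structure of $\Lambda$ enters essentially: the implication $c^\mu_{\alpha,\nu} \neq 0 \Rightarrow \alpha, \nu \subseteq \mu$ forces $J_v^\perp$ to be a sub-coalgebra of $\Lambda$, providing room to adjust $g_0$ within $J_v^\perp$ to satisfy the coproduct constraint while prescribing its action on $\mathcal{S}$.

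The main obstacle will be producing such a $g_0$ explicitly and verifying simultaneously that (i) the required adjustment within $J_v^\perp$ is possible, and (ii) the resulting functional truly requires $\mathcal{S}$ of the precise shape (so that shrinking $\mathcal{S}$ would break at least one $L_{\mu_0}$). This amounts to a combinatorial identity in LR coefficients on sub-partitions of $\alpha_0 \subseteq b^a$, which I expect to follow from the reduction $c^{(i^j,\alpha)}_{(i^j,\beta),\nu} = c^\alpha_{\beta,\nu}$ together with the sub-coalgebra property of $J_v^\perp$.
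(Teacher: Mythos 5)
Your reduction of minimality to the existence, for each $\mu_0\in\mathcal{S}$, of a functional $L_{\mu_0}$ vanishing on $\mathfrak{m}J_v$ and on $\mathcal{S}\setminus\{\mu_0\}$ but not on $\overline{s}_{\mu_0}$ is sound (it is the graded Nakayama/dual reformulation), and your translation via the Hall pairing into the constraint $\Delta g_0 - g_0\otimes 1 \in J_v^{\perp}\otimes\Lambda$, together with the observation that $\mathrm{Span}_{\mathbb Z}\{s_\nu : \nu\subseteq (n-r)^r,\ \nu\not\supseteq i^j\}$ is a subcoalgebra, is correct. But this reformulation is (over $\mathbb{Q}$) \emph{equivalent} to the statement being proved: the existence of such a $g_0$ is precisely the nonexistence of a syzygy of the form (\ref{eqn:goal-syzygy}), so the step you defer as ``the main obstacle'' --- producing $g_0$ and verifying the coproduct constraint --- is the whole theorem. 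No candidate $g_0$ is constructed, no mechanism is given for why the prescribed values on $\mathcal{S}$ are compatible with the constraint, and the expected ``combinatorial identity in LR coefficients'' is not identified. This is a genuine gap, not a technical omission.

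For comparison, the paper's proof supplies exactly the missing mechanism, on the primal side: a putative syzygy is converted (via the identity $c^{(i^j,\lambda),\theta}_{\phantom{x}}{}^{\nu}=c^{\nu_B}_{\lambda,\theta}$ or its wide analogue, the same translation fact you note) into the linear system (\ref{eqn:eqs}) in the unknowns $A_{\lambda,\theta}$; the equations are split into ``tall'' ones (inhomogeneous) and ``wide'' ones (homogeneous); and the key step, Lemma \ref{lem:tall-dependent-on-wide}, shows each tall linear form lies in the $\mathbb{Z}$-span of the wide ones. That step is proved by an explicit reduction algorithm whose telescoping output is controlled by the antipode identity $\nabla\circ(S\otimes\mathrm{id})\circ\Delta=\eta\circ\epsilon$ (Lemma \ref{lem:hopf-identity}), i.e.\ $\sum_{\lambda,\theta}(-1)^{|\lambda|}c^{\nu_B}_{\lambda,\theta}s_{\lambda'}s_{\theta}=0$. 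A further point your sketch does not address is the truncation effect of the box $(n-r)^r$: the argument only works directly when $N\leq\min(j,n-r-i)$, and the paper needs a separate stabilization step (Lemma \ref{lem:restricting-equations}), embedding the system into a larger Grassmannian and specializing variables to zero, to handle the remaining cases. Your dual approach would face the same issue, since $\overline{\Lambda}$ is not self-dual degree by degree in the relevant range. To complete your proposal you would need a concrete construction of $g_0$ (or an equivalent identity) playing the role of the antipode relation, plus an analogue of the stabilization argument.
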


That (\ref{eqn:minimallist}) generates $J_v$ is \cite[Theorem 4.8]{RWY}. The conjectural part, which we prove,
is the minimality claim, that is, no generator can be removed without changing the ideal. 

As explained in \cite[Section~4.4]{RWY}, the minimality from Theorem~\ref{thm:main}
implies obstructions to short presentations of $H^*(X_w)$. First, let 
$w^{(n)}:=w_{r,s,t,n}$ be defined as in \cite[Corollary~4.5]{RWY}; explicitly, by \cite[Section~8]{Reading},
\begin{multline} 
w_{r,s,t,n}=n,n-1,\ldots,(n-r+t+1), s,s-1,\ldots,s-t+1,\\
n-r+t,n-r+t-3,\ldots,s+1,s-t,s-t-1,\ldots,1.
\end{multline}
Then
\begin{equation}
\label{eqn:I=J}
I_{w^{(n)}}=J_{v_{r,s,t,n}}.
\end{equation}
Second, let $n=4m$, $r=s=2m$, and $t=m+1$ for some positive integer $m$. Thus $r=n-r=2m$, $i=j=m$, and $a=b=m$. 
Therefore, Theorem~\ref{thm:main} combined with (\ref{eqn:I=J}) implies $I_{w^{(n)}}$ requires an exponentially growing number
\[{2m\choose m}\sim \frac{4^{m}}{\sqrt{\pi m}}=\frac{(\sqrt{2})^{n+2}}{\sqrt{\pi n}}\] 
of generators. This proves the first exponential lower bound to accompany the $2^n$ upper bound
established in \cite{RWY}.

\subsection{Organization} The remainder of this paper contains our proof of Theorem~\ref{thm:main}. 

Our proof strategy is as follows. If the generators (\ref{eqn:minimallist}) are not minimal, there is a 
syzygy of the form (\ref{eqn:goal-syzygy}). In Section~\ref{sec:theequations}, using the famous Littlewood--Richardson rule, this can be rephrased as a nontrivial solution to a system of linear equations. We divide the system into two subfamilies, ``tall'' and ``wide'' equations (Section~\ref{sec:theequations}). All of these
equations are homogeneous except for some tall equations. 

The key step is to show that coefficient-wise, the tall equations are implied by the wide equations. We prove this by 
transforming the problem in terms of tensors of Schur functions and making use of the Hopf algebra structure of symmetric
functions; this is achieved in Section~\ref{sec:Hopf}. 
As we explain in Section~\ref{sec:conclusion}, this implies the aforementioned system of linear equations is
inconsistent, thus ruling out the existence of a syzygy (\ref{eqn:goal-syzygy}) and completing the proof.

\section{Setting up the linear equations}\label{sec:theequations}

We briefly recall the basic properties of Schur functions and Littlewood--Richardson coefficients. See, \emph{e.g.}, \cite{fultonyt, ECII} for thorough introductions. For a partition $\lambda$ (which we identify with its Young diagram in English notation), the \emph{Schur function} $s_\lambda=s_\lambda(x_1,x_2,\ldots)$ is the generating function for semistandard Young tableaux of shape $\lambda$. Schur functions form a $\mathbb{Z}$-basis of $\Lambda$. The \emph{Littlewood--Richardson coefficients} $c_{\lambda,\mu}^{\nu}$ are the structure constants for the Schur basis:
\[s_\lambda s_{\mu} = \sum_{\nu} c_{\lambda,\mu}^{\nu} s_{\nu}. \]

The \emph{Littlewood--Richardson rule} gives an explicit combinatorial description of $c_{\lambda\mu}^{\nu}$
(the version we use is Theorem~\ref{thm:pictures}).
A fact that we rely on throughout is that Schur functions are homogeneous of degree $|\lambda|=\lambda_1+\lambda_2+\cdots$. In particular, $c_{\lambda,\mu}^{\nu}$ can only be nonzero if $|\nu|=|\lambda|+|\mu|$. Similarly, it follows from the Littlewood--Richardson rule that $c_{\lambda,\mu}^\nu$ can only be nonzero if $\lambda,\mu\subseteq \nu$ (identifying partitions with their Young diagrams in English notation).

For the $\overline{s}_\lambda$ in $\overline{\Lambda}$, the multiplication is the same except partitions $\rho$ must lie in the box $(n-r)^r$ in order for $\overline{s}_\rho$ to be nonzero:
\[\overline{s}_\lambda\overline{s}_{\mu} = \sum_{\nu\subseteq (n-r)^r} c_{\lambda,\mu}^{\nu} \overline{s}_{\nu}. \]

Now, we return to addressing Theorem \ref{thm:main}. Fix $v_{r,s,t,n}\in S_n$, and assign $i,j,a,b$ as in Theorem \ref{thm:main}. To prove the minimality of (\ref{eqn:minimallist}), we must show that for any $\rho\subseteq b^a$, the ring $\overline{\Lambda}$ does not admit syzygies of the form
\begin{equation}
	\label{eqn:goal-syzygy}
	\overline{s}_{(i^j,\rho)}=\sum_{\substack{\lambda\subseteq b^a\\\lambda\neq \rho}}g_{\lambda}\overline{s}_{(i^j,\lambda)}.
\end{equation}
There is one important technicality we dispense with immediately: if $i=n-r$, then $a=0$ and Theorem \ref{thm:main} is trivial. Thus we will assume that $i<n-r$, so $a\geq 1$.

Actually, we will prove the slightly stronger statement that no expression (\ref{eqn:goal-syzygy}) exists for any $n,r,i,j,a,b\geq 1$ with
\[r<n,\quad a+j\leq r,\quad a+i\leq n-r, \quad \mbox{and}\quad b\leq i,j. \]

Fix any such integers $(n,r,i,j,a,b)$. Suppose one can find $\rho\subseteq b^a$ and polynomials $g_{\lambda}\in \overline{\Lambda}$ such that (\ref{eqn:goal-syzygy}) holds. 

Since the Schur functions are homogeneous, both $\Lambda$ and $I_{r,n-r}$ are graded by degree. The quotient $\overline{\Lambda}$ inherits this grading. Thus, we can compare the degree $N=|\rho|$ homogeneous components of both sides of (\ref{eqn:goal-syzygy}) and rearrange to obtain a homogeneous syzygy of the form
\begin{equation}
	\label{eqn:syzygy}
	\sum_{\substack{\lambda\subseteq b^a\\|\lambda|=N}}\overline{s}_{(i^j,\lambda)}f_{\lambda}=\sum_{\substack{\lambda\subseteq b^a\\|\lambda|<N }}f_{\lambda}\overline{s}_{(i^j,\lambda)} \quad\mbox{with $f_{\lambda}$ homogeneous and }\deg f_{\lambda} = N- |\lambda|.
\end{equation}

By homogeneity, the left-hand side of (\ref{eqn:syzygy}) is simply a $\mathbb{Z}$-linear combination (that is, each $f_{\lambda}\in {\mathbb Z}$). We ignore the left-hand side of (\ref{eqn:syzygy}) for now and instead focus on the right-hand side.
Since $f_{\lambda}\in \overline{\Lambda}$, we can expand in the Schur basis to obtain
\[f_{\lambda} = \sum_{\substack{\theta\subseteq (n-r)^r\\|\theta|=N-|\lambda|}}A_{\lambda,\theta}\overline{s}_{\theta}. \]
Then 
\begin{align*}
	\sum_{\substack{\lambda\subseteq b^a\\|\lambda|<N }}f_{\lambda}\overline{s}_{(i^j,\lambda)}
	 &=\sum_{\substack{\lambda\subseteq b^a\\|\lambda|<N }}\sum_{\substack{\theta\subseteq (n-r)^r\\|\theta|=N-|\lambda|}}A_{\lambda,\theta}\overline{s}_{(i^j,\lambda)}\overline{s}_\theta\\
	 &=\sum_{\substack{\lambda\subseteq b^a\\|\lambda|<N }}\sum_{\substack{\theta\subseteq (n-r)^r\\|\theta|=N-|\lambda|}}A_{\lambda,\theta}\sum_{\nu\subseteq (n-r)^r} c_{(i^j,\lambda),\theta}^{\nu}\, \overline{s}_{\nu}\\
	 &=\sum_{\nu\subseteq (n-r)^r}\biggr( \sum_{\substack{\lambda\subseteq b^a\\|\lambda|<N }}\sum_{\substack{\theta\subseteq (n-r)^r\\|\theta|=N-|\lambda|}} c_{(i^j,\lambda),\theta}^{\nu} \, A_{\lambda,\theta}\biggr) \overline{s}_{\nu}.
	 \end{align*}

Viewing the parameters $A_{\lambda,\theta}$ as indeterminates, a syzygy of the form (\ref{eqn:syzygy}) for some $n,r,i,j,a,b,N$ is equivalent to a solution of the simultaneous linear equations
\begin{equation}
	\label{eqn:eqs}
	\biggr\{ \sum_{\substack{\lambda\subseteq b^a\\|\lambda|<N }}\sum_{\substack{\theta\subseteq (n-r)^r\\|\theta|=N-|\lambda|}} c_{(i^j,\lambda),\theta}^{\nu} \, A_{\lambda,\theta}=\chi(\nu)\,\biggr| \,\nu\subseteq (n-r)^r \biggr\},
\end{equation}
where $\chi(\nu)=f_\rho$ when $\nu=(i^j,\rho)$ for some $\rho\subseteq b^a$ with $|\rho|=N$, and $\chi(\nu)=0$ otherwise.

The following definition records the assumptions on the parameters used in the preceding considerations.
\begin{definition}
	Call an integer tuple $\varphi=(n,r,i,j,a,b,N)$ \emph{valid} if $n,r,i,j,a,b,N\geq 1$ with
	\[r<n,\quad a+j\leq r,\quad a+i\leq n-r,\quad b\leq i,j,\quad\mbox{and}\quad N\leq ab. \]
\end{definition}

Due to vanishing of Littlewood--Richardson coefficients, not all partitions $\nu\subseteq (n-r)^r$ contribute meaningfully to (\ref{eqn:eqs}).
\begin{definition}
	Call a partition $\nu$ with $i^j\subseteq\nu\subseteq (n-r)^r$ and $|\nu|=ij+N$ \emph{allowable}.
\end{definition}
When $\nu$ is not allowable, both $\chi(\nu)$ and all the Littlewood--Richardson coefficients $c_{(i^j,\lambda),\theta}^{\nu}$ vanish. In this case both sides of the corresponding equation in (\ref{eqn:eqs}) are zero, so we can assume that $\nu$ is always allowable in~(\ref{eqn:eqs}). We will actually show that the equations (\ref{eqn:eqs}) are inconsistent by focusing on a smaller subset of them.

\begin{definition}
	Call an allowable partition $\nu$ \emph{decomposable} if $\nu_{j+1}\leq b$ (taking $\nu_{j+1}=0$ if $\ell(\nu)= j$), and $\ell(\nu)\leq j+a$. Write $\decomp_{\varphi}$ for the set of decomposable partitions $\nu$. We will decompose any $\nu\in\decomp_{\varphi}$ into 3 smaller partitions via Figure \ref{fig:decomposition}.
\end{definition}
We will similarly say that an equation in (\ref{eqn:eqs}) is \emph{decomposable} if the partition $\nu$ it corresponds to is decomposable. 

\begin{figure}[ht]
	\begin{center}
		\includegraphics{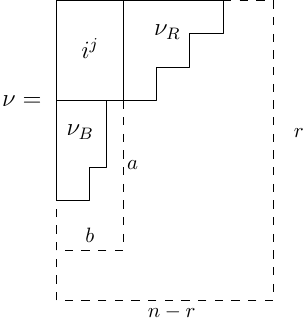}
	\end{center}
	\caption{}
	\label{fig:decomposition}
\end{figure}

We divide the decomposable partitions into two groups.
\begin{definition}
	We say a decomposable partition $\nu$ is \emph{tall} if $\nu_1=i$ (so $\nu_R$ is empty), and \emph{wide} otherwise.
\end{definition}
Similarly, refer to the equation of a tall (resp.\ wide) partition $\nu$ as tall (resp.\ wide).

\begin{lemma}
	\label{lem:implication-of-syzygy}
	Suppose for some valid $\varphi=(n,r,i,j,a,b,N)$ there is a homogeneous syzygy of degree $N$ in $\overline{\Lambda}$ of the form 
	\[\sum_{\substack{\lambda\subseteq b^a\\|\lambda|=N}}\overline{s}_{(i^j,\lambda)}f_{\lambda}=\sum_{\substack{\lambda\subseteq b^a\\|\lambda|<N }}f_{\lambda}\overline{s}_{(i^j,\lambda)}\]
	with the left-hand side nonzero. 
	Then the decomposable equations 
	\begin{equation}
		\label{eqn:decomposable}
		\biggr\{ \sum_{\substack{\lambda\subseteq b^a\\|\lambda|<N }}\sum_{\substack{\theta\subseteq (n-r)^r\\|\theta|=N-|\lambda|}} c_{(i^j,\lambda),\theta}^{\nu} \, A_{\lambda,\theta}=\chi(\nu)\,\biggr| \,\nu\in \decomp_{\varphi} \biggr\}
	\end{equation}
	are consistent, where $\chi(\nu)\in\mathbb{Z}$ is defined for $\nu\in \decomp_{\varphi}$ by 
	\[\chi(\nu)=\begin{cases}
		f_{\nu_B} &\mbox{ if $\nu$ is tall,}\\
		0 &\mbox{ if $\nu$ is wide.}
	\end{cases} \]
\end{lemma}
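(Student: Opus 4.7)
The plan is to extract a candidate solution of (\ref{eqn:decomposable}) directly from the given syzygy and then verify that the right-hand sides $\chi(\nu)$ agree with the piecewise definition in the lemma. Concretely, expand each $f_\lambda$ appearing on the right-hand side of the syzygy in the Schur basis of $\overline{\Lambda}$, defining $A_{\lambda,\theta}$ to be the coefficient of $\overline{s}_\theta$ in $f_\lambda$. The computation immediately preceding the lemma shows that for every $\nu\subseteq (n-r)^r$, the coefficient of $\overline{s}_\nu$ in the right-hand side of the syzygy equals the left-hand side of the corresponding equation in (\ref{eqn:eqs}). Equating coefficients of $\overline{s}_\nu$ on both sides of the syzygy and invoking the definition of $\chi$, the system (\ref{eqn:eqs}) holds for every $\nu$, hence in particular for every $\nu\in\decomp_{\varphi}$.

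It then suffices to verify that the value $\chi(\nu)$ used in (\ref{eqn:eqs}) matches the piecewise description used in (\ref{eqn:decomposable}). Suppose $\nu\in\decomp_{\varphi}$ is tall, so $\nu_1=i$. Since $\nu$ is allowable, $\nu\supseteq i^j$, which forces $\nu_1=\cdots=\nu_j=i$. Setting $\nu_B:=(\nu_{j+1},\nu_{j+2},\ldots)$, decomposability gives $\nu_{j+1}\leq b$ and $\ell(\nu_B)\leq a$, so $\nu_B\subseteq b^a$; thus $\nu=(i^j,\nu_B)$ with $|\nu_B|=N$, yielding $\chi(\nu)=f_{\nu_B}$ from the original definition. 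If instead $\nu$ is wide, then $\nu_1>i\geq b$, so $\nu$ cannot be written as $(i^j,\rho)$ with $\rho\subseteq b^a$ (every such partition has first part exactly $i$), and hence $\chi(\nu)=0$.

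The lemma itself is essentially a bookkeeping step and I do not anticipate any real obstacle; the work lies entirely in writing down the decomposition case analysis carefully. The substantive content of the paper appears only afterward: the hypothesis that the left-hand side of the syzygy is nonzero will ensure that $\chi$ is nonzero on at least one tall $\nu$, while Section~\ref{sec:Hopf} will show, via the Hopf algebra structure on $\Lambda$, that the tall equations are coefficient-wise consequences of the wide ones (for which $\chi\equiv 0$). Together these facts will then contradict the consistency established here, completing the overall argument.
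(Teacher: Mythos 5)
Your proof is correct and follows essentially the same route as the paper: take $A_{\lambda,\theta}$ to be the Schur coefficients of the $f_\lambda$, note the computation before the lemma shows the full system (\ref{eqn:eqs}) is then satisfied, and observe that restricting to the decomposable subset keeps it consistent. The paper leaves the tall/wide matching of $\chi$ implicit ("the above considerations"), whereas you spell out that for decomposable $\nu$ the earlier definition of $\chi$ agrees with the piecewise one; that verification is accurate and harmless.
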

\begin{proof}
	The above considerations show that the lemma is true if we replace (\ref{eqn:decomposable}) with the superset (\ref{eqn:eqs}). Clearly, removing equations from a consistent linear system yields a consistent linear system.
\end{proof}

We show that the decomposable equations are inconsistent in Theorem \ref{thm:inconsistent}. We will use the following notation for left-hand sides of the individual equations in (\ref{eqn:decomposable}).
\begin{definition}
	Given a valid $\varphi=(n,r,i,j,a,b,N)$ and any $\nu\in \decomp_{\varphi}$, define
	\[\linear_{\varphi}(\nu)= \sum_{\substack{\lambda\subseteq b^a\\|\lambda|<N }}\sum_{\substack{\theta\subseteq (n-r)^r\\|\theta|=N-|\lambda|}} c_{(i^j,\lambda),\theta}^{\nu} \, A_{\lambda,\theta}. \]
\end{definition}

\begin{example}
	\label{exp:equations}
	Let $n=12$, $r=6$, $i=j=a=b=3$, and $N=4$. In this case, the allowable partitions $\nu$ are:
	\begin{center}
		\includegraphics[scale=1]{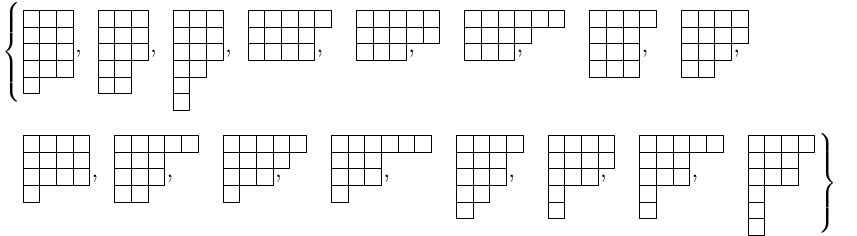}
	\end{center}
	These partitions are all decomposable. We record their corresponding linear combinations in Figure~\ref{fig:m3n4equations}.
\end{example}

\begin{example}
	If $n=17$, $r=9$, $i=5$, $j=3$, $a=4$, $b=2$, and $N=10$, then not all allowable partitions are decomposable.
	This is the case for the two partitions shown below.
	\begin{center}
		\includegraphics[scale=.8]{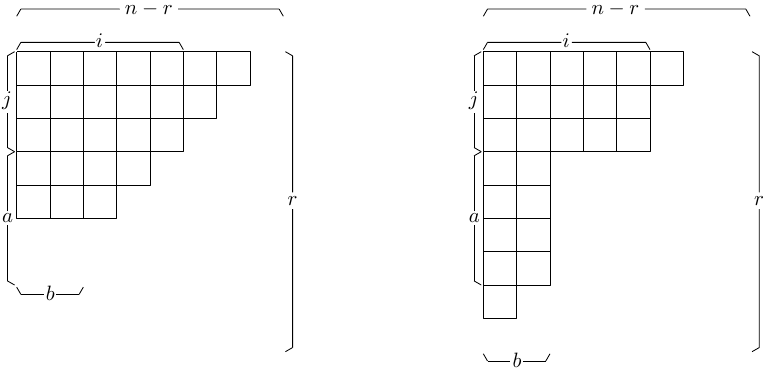}
	\end{center}
	In particular, for these parameter values there are strictly more (nontrivial) equations in (\ref{eqn:eqs}) than in (\ref{eqn:decomposable}). The point of Lemma~\ref{lem:implication-of-syzygy} is that we do not need to use these equations to show the system (\ref{eqn:eqs}) is inconsistent.
\end{example}

\begin{figure}[ht]
	\begin{center}
		\includegraphics[scale=1.15]{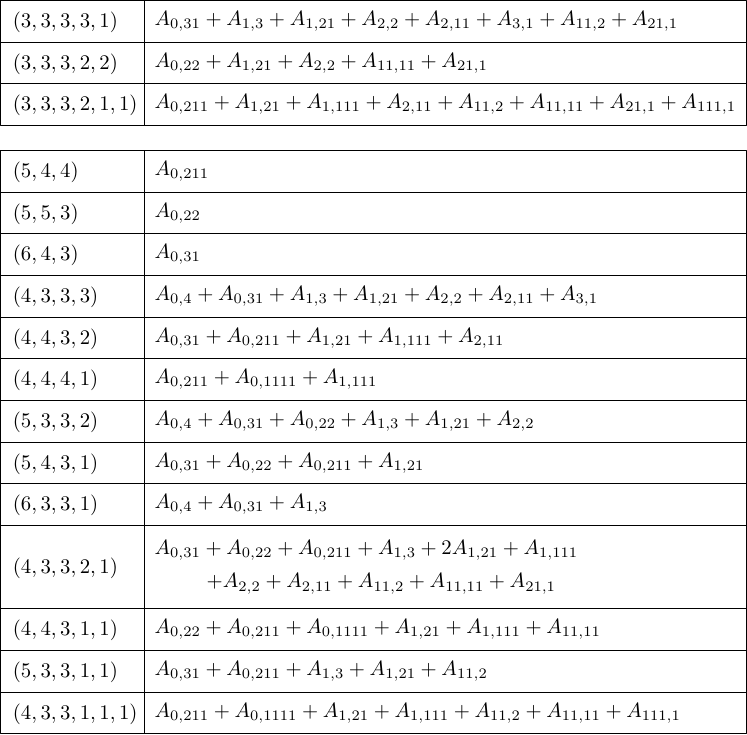}
		\caption{The decomposable equations for Example \ref{exp:equations}, divided into tall and wide equations.}
		\label{fig:m3n4equations}
	\end{center}
\end{figure}

\section{The Hopf Algebra of symmetric functions}\label{sec:Hopf}
Fix a valid tuple $\varphi=(n,r,i,j,a,b,N)$. View the linear forms $\linear_{\varphi}(\nu)$ as vectors in the ambient vector space $V$ with basis 
$\left\{A_{\lambda,\theta} \right\}_{\lambda,\theta}$, where $\lambda,\theta\subseteq (n-r)^r$. 
Let $T$ be the linear isomorphism $T:V\to \overline{\Lambda}\otimes \overline{\Lambda}$ defined by
\[T(A_{\lambda,\theta}) = \overline{s}_{\lambda}\otimes \overline{s}_{\theta}.\]
We use this isomorphism to give a useful perspective on the vectors $\linear_{\varphi}(\nu)\in V$ via 
the (ring) product structure on $\overline{\Lambda}\otimes\overline{\Lambda}$ and 
the coproduct map
\[\Delta:\Lambda\to \Lambda\otimes\Lambda.\]

Recall the coproduct $\Delta$ on symmetric functions acts on Schur functions by
\[\Delta(s_{\nu}) = \sum_{\lambda,\mu} c_{\lambda,\mu}^\nu (s_\lambda\otimes s_{\mu}).\]
Since the projection $\Lambda\to\overline{\Lambda}$ naturally induces a projection 
\[\Lambda\otimes \Lambda \to \overline{\Lambda}\otimes \overline{\Lambda}, \]
we will also denote the latter with overlines.

\begin{definition}
	\label{def:cp}
	We define a linear endomorphism $\mathrm{CP}:\overline{\Lambda}\otimes \overline{\Lambda}\to \overline{\Lambda}\otimes \overline{\Lambda}$ as follows.
	For a simple tensor $\overline{s}_\lambda\otimes \overline{s}_\mu\in \overline{\Lambda}\otimes\overline{\Lambda}$, define
	\begin{align}
		\mathrm{CP}(\overline{s}_\lambda\otimes \overline{s}_\mu) = \begin{cases}
			\overline{\Delta(s_{\lambda})} - (\overline{s}_{\lambda}\otimes 1) &\mbox{ if $\overline{s}_\mu= 1$},\\
			\overline{\Delta(s_{\lambda})}(1\otimes \overline{s}_{\mu}) &\mbox{ if $\overline{s}_\mu\neq 1$}.
		\end{cases}
	\end{align}
	Define $\mathrm{CP}$ on all of $\overline{\Lambda}\otimes \overline{\Lambda}$ by extending linearly.
\end{definition}

\begin{example}
	\label{exp:cp-example-calculations}
	For example with $n=12$ and $r=6$ (as in Example \ref{exp:equations}), one can compute 
	\begin{align*}
		\mathrm{CP}(\overline{s}_{21}\otimes \overline{s}_1) &= \overline{\Delta(s_{21})}(1\otimes \overline{s}_1)\\
		&=\left(1\otimes \overline{s}_{21}
			+\overline{s}_1\otimes \overline{s}_{11}
			+\overline{s}_1\otimes \overline{s}_2
			+\overline{s}_2\otimes \overline{s}_1 
			+ \overline{s}_{11}\otimes \overline{s}_1
			+\overline{s}_{21}\otimes 1
		  \right)(1\otimes \overline{s}_1)\\
		&=(1\otimes \overline{s}_{21}\overline{s}_1)
			+(\overline{s}_1\otimes \overline{s}_{11}\overline{s}_1)
			+(\overline{s}_1\otimes \overline{s}_2\overline{s}_1)
			+(\overline{s}_2\otimes \overline{s}_1^2)
			+ (\overline{s}_{11}\otimes \overline{s}_1^2)
			+(\overline{s}_{21}\otimes \overline{s}_1),
	\end{align*}
	and 
	\begin{align*}
		\mathrm{CP}(\overline{s}_{22}\otimes 1) &=\overline{\Delta(s_{22})}- (\overline{s}_{22}\otimes 1)\\
		&=1\otimes \overline{s}_{22}
		+\overline{s}_1\otimes \overline{s}_{21}
		+\overline{s}_2\otimes \overline{s}_2
		+\overline{s}_{11}\otimes \overline{s}_{11} 
		+ \overline{s}_{21}\otimes \overline{s}_1.
	\end{align*}
\end{example}

We think of each decomposable $\nu$ as corresponding to the simple tensor $\overline{s}_{\nu_B}\otimes \overline{s}_{\nu_R}$. Let us consider how the map $\mathrm{CP}$ acts on this tensor when $\nu$ is tall versus when $\nu$ is wide.

When $\nu$ is tall, $|\nu_B|=N$ and $\overline{s}_{\nu_R}=\overline{s}_{(0)}=1$. Then by the first case of Definition \ref{def:cp},
\begin{align*}
	\mathrm{CP}(\overline{s}_{\nu_B}\otimes 1) &= \overline{\Delta(s_{\nu_B})} - (\overline{s}_{\nu_B}\otimes 1)\\
	&=\sum_{\lambda\subsetneq \nu_B}\sum_{\theta} c_{\lambda,\theta}^{\nu_B} (\overline{s}_\lambda\otimes \overline{s}_{\theta})\\	&=\sum_{\substack{\lambda\subseteq \nu_B\\|\lambda|<N}}\sum_{\substack{\theta\subseteq (n-r)^r\\|\theta|=N-|\lambda|}} c_{\lambda,\theta}^{\nu_B} (\overline{s}_\lambda\otimes \overline{s}_{\theta}).
\end{align*}
When $\nu$ is wide, we have $|\nu_B|<N$ so $|\nu_R|>0$. Then the second case of Definition \ref{def:cp} implies 
\begin{align*}
	\mathrm{CP}(\overline{s}_{\nu_B}\otimes \overline{s}_{\nu_R}) &= \overline{\Delta(s_{\nu_B})}(1\otimes \overline{s}_{\nu_R})\\
	&=\sum_{\lambda\subseteq \nu_B}\sum_{\mu\subseteq \nu_B} c_{\lambda,\mu}^{\nu_B} \overline{s}_\lambda \otimes (\overline{s}_\mu \overline{s}_{\nu_R}) \\
	&=\sum_{\substack{\lambda\subseteq \nu_B\\|\lambda|<N}}\sum_{\mu\subseteq \nu_B}\sum_{\theta\subseteq (n-r)^r}c_{\lambda,\mu}^{\nu_B} \, c_{\mu,\nu_R}^{\theta} (\overline{s}_{\lambda} \otimes \overline{s}_{\theta})\\
	&=\sum_{\substack{\lambda\subseteq \nu_B\\|\lambda|<N}}\sum_{\substack{\theta\subseteq (n-r)^r\\|\theta|=N-|\lambda|}}\left(\sum_{\mu\subseteq \nu_B}c_{\lambda,\mu}^{\nu_B} \, c_{\mu,\nu_R}^{\theta}\right) (\overline{s}_{\lambda} \otimes \overline{s}_{\theta}).
\end{align*}

Recall $V$ denotes the vector space with basis $\left\{A_{\lambda,\theta} \mid \lambda,\theta\subseteq (n-r)^r\right\}$, and $T$ is the linear isomorphism $T:V\to \overline{\Lambda}\otimes \overline{\Lambda}$ defined by $T(A_{\lambda,\theta}) = \overline{s}_{\lambda}\otimes \overline{s}_{\theta}$.
\begin{proposition}
	\label{prop:eq-to-tensors}
	For any decomposable partition $\nu$,
	\[T(\linear_{\varphi}(\nu)) = \mathrm{CP}(\overline{s}_{\nu_B}\otimes \overline{s}_{\nu_R}).\]
\end{proposition}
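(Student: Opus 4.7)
The plan is to expand both sides of the claimed equality in the basis $\{\overline{s}_\lambda\otimes\overline{s}_\theta\}$ of $\overline{\Lambda}\otimes\overline{\Lambda}$ and check that the coefficients agree. Applying $T$ directly to the definition of $\linear_\varphi(\nu)$ gives
\[T(\linear_\varphi(\nu)) = \sum_{\substack{\lambda\subseteq b^a\\|\lambda|<N}}\sum_{\substack{\theta\subseteq (n-r)^r\\|\theta|=N-|\lambda|}} c_{(i^j,\lambda),\theta}^\nu \, (\overline{s}_\lambda\otimes \overline{s}_\theta),\]
while the two case-computations of $\mathrm{CP}(\overline{s}_{\nu_B}\otimes\overline{s}_{\nu_R})$ displayed just before the proposition already express that tensor in the same basis. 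The proposition thus reduces to two identities among Littlewood--Richardson coefficients: in the tall case, where $\nu_R=\emptyset$ and $\nu=(i^j,\nu_B)$, the identity $c_{(i^j,\lambda),\theta}^\nu = c_{\lambda,\theta}^{\nu_B}$; and in the wide case, the identity $c_{(i^j,\lambda),\theta}^\nu = \sum_\mu c_{\lambda,\mu}^{\nu_B}\,c_{\mu,\nu_R}^\theta$.

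I would prove both identities from one geometric observation about the skew shape $\nu/(i^j,\lambda)$. Because $\nu$ is decomposable, its cells split into two pieces: a translate of the straight shape $\nu_R$ sitting in rows $1,\dots,j$ and columns strictly greater than $i$, and a translate of the skew shape $\nu_B/\lambda$ sitting in rows $j+1,\dots,j+a$ and columns at most $b$. The validity condition $b\leq i$ ensures these two pieces occupy disjoint sets of rows \emph{and} disjoint sets of columns, so the semistandardness constraints on an SSYT of $\nu/(i^j,\lambda)$ decouple entirely. Multiplying the two independent generating functions yields the factorization
\[s_{\nu/(i^j,\lambda)} = s_{\nu_R}\cdot s_{\nu_B/\lambda}.\]
In the tall case $\nu_R$ is empty, so this reduces to $s_{\nu/(i^j,\lambda)}=s_{\nu_B/\lambda}$, and reading the coefficient of $s_\theta$ gives the tall identity. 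In the wide case, expanding $s_{\nu_B/\lambda}=\sum_\mu c_{\lambda,\mu}^{\nu_B}\,s_\mu$ and then $s_{\nu_R}s_\mu=\sum_\theta c_{\mu,\nu_R}^\theta s_\theta$, and again comparing the coefficient of $s_\theta$, produces the wide identity. A minor bookkeeping point: the sum defining $\linear_\varphi(\nu)$ runs over $\lambda\subseteq b^a$ whereas the $\mathrm{CP}$-formulas sum over $\lambda\subseteq\nu_B$; this discrepancy is harmless because $c_{(i^j,\lambda),\theta}^\nu$ vanishes whenever $\lambda\not\subseteq\nu_B$, and passage to $\overline{\Lambda}$ automatically restricts $\theta$ to $(n-r)^r$.

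The main (and essentially only) content is the geometric decomposition of $\nu/(i^j,\lambda)$ into row- and column-separated pieces; once that is in hand, everything else is routine manipulation with skew Schur functions and does not yet invoke the Hopf algebra machinery to be used in later sections.
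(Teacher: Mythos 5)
Your proof is correct, and it takes a genuinely different route from the paper's for the key combinatorial step. Both arguments reduce the proposition to the same two Littlewood--Richardson identities, $c_{(i^j,\lambda),\theta}^{\nu}=c_{\lambda,\theta}^{\nu_B}$ (tall) and $c_{(i^j,\lambda),\theta}^{\nu}=\sum_{\mu}c_{\lambda,\mu}^{\nu_B}c_{\mu,\nu_R}^{\theta}$ (wide), and your bookkeeping remarks (vanishing of $c_{(i^j,\lambda),\theta}^{\nu}$ when $\lambda\not\subseteq\nu_B$, and the truncation to $\theta\subseteq(n-r)^r$ in $\overline{\Lambda}$) are exactly the right points to dispose of. Where you diverge is the proof of these identities: the paper stays inside the picture model of Theorem~\ref{thm:pictures}, handling the tall case by observing $\nu/(i^j,\lambda)$ and $\nu_B/\lambda$ are the same shape, and the wide case by first conjugating via the involution $\omega$ (so $c_{\alpha,\beta}^{\gamma}=c_{\alpha',\beta'}^{\gamma'}$) and then splitting a picture $\nu'/(i^j,\lambda)'\leftrightarrow\theta'$ into a pair of pictures. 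You instead use the classical facts that $s_{\nu/\mu}=\sum_{\theta}c_{\mu,\theta}^{\nu}s_{\theta}$ and that a skew shape whose components occupy disjoint rows and disjoint columns has factoring skew Schur function, giving $s_{\nu/(i^j,\lambda)}=s_{\nu_R}\,s_{\nu_B/\lambda}$ (the column-disjointness is exactly where $b\leq i$ enters, just as you say, and $b\leq i$ also guarantees $(i^j,\lambda)$ is a partition). This treats both cases uniformly, avoids conjugation and the picture formalism entirely, and rests only on standard skew Schur function facts; what the paper's version buys in exchange is that it never needs the factorization lemma and works wholly within the LR-picture machinery it has already introduced for other purposes. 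Either argument is complete; if you write yours up, cite the two standard facts (e.g.\ \cite{fultonyt} or \cite{ECII}) rather than leaving them implicit.
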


To prove Proposition \ref{prop:eq-to-tensors}, we first review Littlewood--Richardson pictures, a combinatorial model of Littlewood--Richardson coefficients due to James and Peel \cite{James.Peel}, and Zelevinsky \cite{Zelev}.

A \emph{skew diagram} is obtained from two partitions $\nu$ and $\lambda$ with 
$\lambda\subseteq \nu$ by aligning the northwest box of each and taking the set difference. This diagram is
denoted $\nu/\lambda$. Any partition $\theta$ is also a skew diagram $\theta=\theta/(0)$.

A \emph{picture} between two skew diagrams is a bijection between their boxes such that if a box $A$ is weakly above and weakly left of a box $B$ in either diagram, then the corresponding boxes $A'$ and $B'$ of the other diagram appear in order in the reverse row numbering (the numbering of boxes right-to-left in each row, working top-to-bottom). The following result can be found in \cite[Chapter 5.3]{fultonyt}.

\begin{theorem}
\label{thm:pictures}
	The Littlewood--Richardson coefficient $c_{\lambda,\mu}^\nu$ equals the number of pictures between $\nu/\lambda$ and $\mu$.
\end{theorem}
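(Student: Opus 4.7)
My plan is to prove Theorem~\ref{thm:pictures} by constructing an explicit bijection between pictures $f\colon \nu/\lambda \to \mu$ and Littlewood--Richardson (LR) tableaux of shape $\nu/\lambda$ and content $\mu$. The classical form of the Littlewood--Richardson rule (e.g.\ \cite[Chapter~5]{fultonyt}), which I would invoke as a separately established fact, states that $c_{\lambda,\mu}^{\nu}$ equals the number of semistandard fillings of $\nu/\lambda$ of content $\mu$ whose reverse reading word is a reverse lattice word. The bijection then completes the proof.

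Given a picture $f$, I define $\Phi(f) = T_f$ by labelling each box $A \in \nu/\lambda$ with the row index (in $\mu$) of $f(A)$. The content of $T_f$ is automatically $\mu$ since row $k$ of $\mu$ contains $\mu_k$ boxes. I then check semistandardness: whenever $A, B$ lie in the same row or same column of $\nu/\lambda$ with $A$ weakly NW of $B$, the forward picture condition places $f(A)$ weakly before $f(B)$ in the reverse reading order of $\mu$, so the row of $f(A)$ is at most that of $f(B)$. For the strict column inequality, I rule out equality of rows by applying the reverse picture condition to $f^{-1}$: equal rows of $f(A), f(B)$ with $A$ strictly above $B$ in the same column of $\nu/\lambda$ would place $f(B)$ strictly West and weakly North of $f(A)$ in $\mu$, forcing $B$ to precede $A$ in the reverse reading of $\nu/\lambda$, a contradiction.

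To obtain the reverse lattice word condition on $T_f$, I would use the following consequence of the two picture conditions together: the $\mu_k$ boxes of $\nu/\lambda$ labelled $k$ by $T_f$, when listed in reverse reading order of $\nu/\lambda$, are mapped by $f$ to the boxes of row $k$ of $\mu$ in reverse reading order there (that is, right to left). From this dictionary, for any prefix of the reverse reading word of $T_f$ ending at a box $C \in \nu/\lambda$, the count of $k$'s is the number of row-$k$ boxes of $\mu$ that appear weakly before $f(C)$ in reverse reading of $\mu$. The inequality $\#k \ge \#(k+1)$ then reduces to a direct comparison leveraging $\mu_k \ge \mu_{k+1}$ and the inverse picture condition. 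Conversely, given an LR tableau $T$, I would define $f_T$ by sending the $p$-th box labelled $k$ (in reverse reading of $\nu/\lambda$) to the box of $\mu$ in row $k$ and column $\mu_k - p + 1$; by construction $\Phi(f_T) = T$ and $f_{\Phi(f)} = f$.

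The main obstacle is verifying that $f_T$ satisfies both picture conditions. The forward direction (pairs in $\nu/\lambda$) will follow fairly directly from semistandardness of $T$ together with the normalizing choice made in the definition of $f_T$. The harder part is the inverse condition: if $A'$ lies weakly NW of $B'$ in $\mu$, I must show that $f_T^{-1}(A')$ precedes $f_T^{-1}(B')$ in the reverse reading of $\nu/\lambda$. This is precisely where the reverse lattice word hypothesis on $T$ must be used, and it requires a case split on whether $A', B'$ sit in the same row of $\mu$ (direct from the construction) or in different rows of $\mu$ (where the lattice inequality governing partial counts of labels $k$ and $k+1$ must be unpacked to control the relative positions of the preimages). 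Handling this cross-row case cleanly is the technical crux of the argument.
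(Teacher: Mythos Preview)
The paper does not supply its own proof of Theorem~\ref{thm:pictures}: it is stated as a known fact with the pointer ``The following result can be found in \cite[Chapter 5.3]{fultonyt}.'' So there is nothing in the paper to compare your argument against beyond that citation.

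Your plan is the standard one and is essentially what the cited reference carries out: assign to a picture $f$ the filling $T_f$ whose entry in a box $A$ is the row index of $f(A)$, check that $T_f$ is a Littlewood--Richardson skew tableau, and invert by sending the $p$-th box labelled $k$ (in reverse reading order) to the box $(k,\mu_k-p+1)$ of $\mu$. Your identification of the difficult step---verifying the inverse picture condition for $f_T$ in the cross-row case using the reverse lattice word hypothesis---is accurate, and the outline you give is correct. If you want a self-contained write-up, that case analysis is the only place where real care is needed; otherwise, for the purposes of this paper, simply citing \cite[Chapter~5.3]{fultonyt} (as the authors do) is entirely adequate.
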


\begin{example}
	The Littlewood--Richardson coefficient $c_{3222,431}^{543221}=4$ is witnessed by the pictures in Figure \ref{fig:pictures}.
\end{example}
\begin{figure}[h]
	\begin{center}
		\includegraphics[scale=.75]{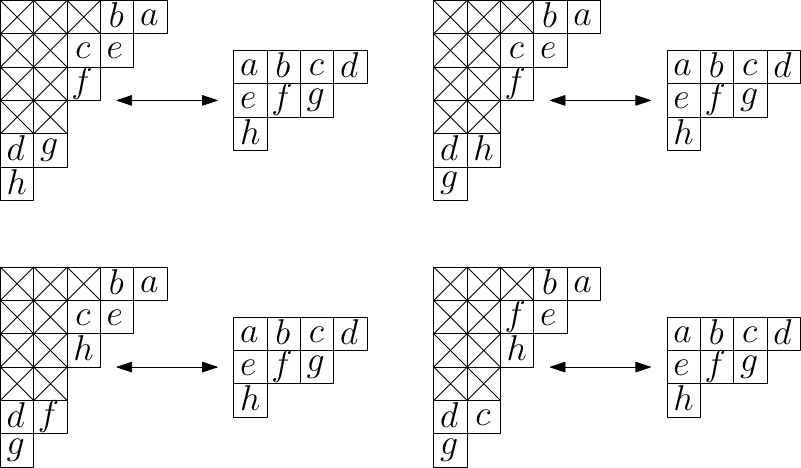}
	\end{center}
	\caption{Example of Theorem~\ref{thm:pictures}.}
	\label{fig:pictures}
\end{figure}

\begin{proof}[Proof of Proposition \ref{prop:eq-to-tensors}.]
	We must show that $T(\linear_{\varphi}(\nu)) = \mathrm{CP}(\overline{s}_{\nu_B}\otimes \overline{s}_{\nu_R})$, that is
	\[\mathrm{CP}(\overline{s}_{\nu_B}\otimes \overline{s}_{\nu_R}) = \sum_{\substack{\lambda\subseteq \nu_B\\|\lambda|<N}}\sum_{\substack{\theta\subseteq (n-r)^r\\|\theta|=N-|\lambda|}} c_{(i^j,\lambda),\theta}^\nu\, (\overline{s}_{\lambda}\otimes \overline{s}_\theta). \]
	Suppose first that $\nu$ is tall. Then as we computed after Example \ref{exp:cp-example-calculations},
	\[
		\mathrm{CP}(\overline{s}_{\nu_B}\otimes 1) =\sum_{\substack{\lambda\subseteq \nu_B\\|\lambda|<N}}\sum_{\substack{\theta\subseteq (n-r)^r\\|\theta|=N-|\lambda|}} c_{\lambda,\theta}^{\nu_B} (\overline{s}_\lambda\otimes \overline{s}_{\theta}).
	\]
	Thus it suffices to prove that
	\[ c_{(i^j,\lambda),\theta}^\nu = c_{\lambda,\theta}^{\nu_B}\]
	for all $\lambda,\theta\subseteq (n-r)^r$ with $\lambda\subsetneq \nu_B$ and $|\theta|=N-|\lambda|$. This is immediate from Theorem \ref{thm:pictures}, since the skew shapes $\nu_B/\lambda$ and $\nu/(i^j,\lambda)$ are (essentially) the same.
	
	Now, assume that $\nu$ is wide. Again consulting the computations following Example \ref{exp:cp-example-calculations}, we have 
	\[
		\mathrm{CP}(\overline{s}_{\nu_B}\otimes \overline{s}_{\nu_R}) 
		=\sum_{\substack{\lambda\subseteq \nu_B\\|\lambda|<N}}\sum_{\substack{\theta\subseteq (n-r)^r\\|\theta|=N-|\lambda|}}\left(\sum_{\mu\subseteq \nu_B}c_{\lambda,\mu}^{\nu_B} \, c_{\mu,\nu_R}^{\theta}\right) (\overline{s}_{\lambda} \otimes \overline{s}_{\theta}).
	\]
	Hence it suffices to show that
	\[c_{(i^j,\lambda),\theta}^\nu=\sum_{\mu\subseteq \nu_B}c_{\lambda,\mu}^{\nu_B} \, c_{\mu,\nu_R}^{\theta}\]
	for all $\lambda,\theta\subseteq (n-r)^r$ with $\lambda\subseteq \nu_B$, $|\lambda|<N$, and $|\theta|=N-|\lambda|$.
	
	Denote the conjugate of a partition $\tau$ by $\tau'$. The map $\omega:\Lambda\to\Lambda$ defined by $\omega(s_\rho)=s_{\rho'}$ for each partition $\rho$ is known to be ring involution (see for instance \cite[Chapter 6.2]{fultonyt}). This implies that $c_{\alpha,\beta}^{\gamma}=c_{\alpha',\beta'}^{\gamma'}$ for any $\alpha,\beta,\gamma$. Thus, the proof is complete if we can show 
	\[c^{\nu'}_{(i^j,\lambda)',\theta'} = \sum_{\mu\subseteq \nu_B} c_{\lambda',\mu'}^{(\nu_B)'}\, c_{\mu',(\nu_R)'}^{\theta'}. \]
	
	It straightforward to check that a correspondence of skew diagrams
	\[\nu'/(i^j,\lambda)' \longleftrightarrow \theta' \]
	is a picture if and only if it induces pictures 
	\begin{align*}
		(\nu_B)'/\lambda' &\longleftrightarrow \mu'\subseteq \theta' \mbox{ for some $\mu$, and }\\
		\theta'/\mu'&\longleftrightarrow (\nu_R)'.\qedhere
	\end{align*}
\end{proof}

We aim to show that the linear system (\ref{eqn:decomposable}) of Lemma \ref{lem:implication-of-syzygy} is always inconsistent, yielding a contradiction. The main idea is to show that for each tall $\nu$, one can write $\linear_{\varphi}(\nu)$ as a linear combination of the vectors $\{\linear_{\varphi}(\rho) \mid \rho \mbox{ is wide}\}$. We will see in Lemma \ref{lem:tall-dependent-on-wide} that Algorithm~1 below accomplishes this. 

\begin{algorithm}
	\caption{}
	\label{alg}
	\begin{algorithmic}
		\State input valid $\varphi=(n,r,i,j,a,b,N)$.
		\State input a tall $\nu\in\decomp_{\varphi}$.		
		\\
		\State initialize the tensor
		\[\xi^{(0)}\coloneqq \mathrm{CP}(\overline{s}_{\nu_B}\otimes 1) =\sum_{\lambda\subsetneq \nu_B}\sum_{\substack{\theta\subseteq (n-r)^r\\|\theta|=N-|\lambda|}} c_{\lambda,\theta}^{\nu_B} (\overline{s}_\lambda\otimes \overline{s}_{\theta}). \]
		\State initialize $m=1$.
		
		\While{$\xi^{(m-1)}$ contains a nonzero term of the form
		\[\gamma(\overline{s}_{\lambda}\otimes (\overline{s}_{\mu^{(1)}}\cdots \overline{s}_{\mu^{(k)}})) \mbox{ with $|\lambda|>0$,} \mbox{\quad(where $k\geq 1$, and $\gamma\in\mathbb{Z}$)} \]
		}
		\State Without expanding the product on the right factor of each tensor, set
		\begin{align*}
			\xi^{(m)}&\coloneqq\xi^{(m-1)}-\gamma\mathrm{CP}(\overline{s}_{\lambda}\otimes (\overline{s}_{\mu^{(1)}}\cdots \overline{s}_{\mu^{(k)}}))\\
			&=\xi^{(m-1)}-\gamma\overline{\Delta(s_{\lambda})} (1\otimes (\overline{s}_{\mu^{(1)}}\cdots \overline{s}_{\mu^{(k)}}))\\
			&=\xi^{(m-1)}-\sum_{\rho,\tau \subseteq (n-r)^r} \gamma \,c_{\rho,\tau}^{\lambda} (\overline{s}_\rho\otimes (\overline{s}_{\mu^{(1)}}\cdots \overline{s}_{\mu^{(k)}}\overline{s}_\tau )).
		\end{align*}
		\State increment $m$.
		\EndWhile
		
		\State \Return $\xi^{(m-1)}$
	\end{algorithmic}
\end{algorithm}
We first demonstrate this algorithm, then proceed with analyzing it.
\begin{example}
	\label{exp:coproduct-process}
	Continuing Example \ref{exp:equations}, $\nu=(3,3,3,3,1)$ is decomposable and tall, with
	\[ \linear_{\varphi}(\nu) = A_{0,31}+A_{1,3}+A_{1,21}+A_{2,2}+A_{2,11}+A_{3,1}+A_{11,2}+A_{21,1}.\]

	The corresponding tensor is
	\begin{align*}
		\mathrm{CP}(\overline{s}_{31}\otimes 1)&=\overline{\Delta(s_{31})}-\overline{s}_{31}\otimes 1\\
		&=1\otimes \overline{s}_{31} + \overline{s}_{1}\otimes \overline{s}_{3} + \overline{s}_{1}\otimes \overline{s}_{21} + \overline{s}_{2}\otimes \overline{s}_{2} +
		\overline{s}_{2}\otimes \overline{s}_{11}\\
		&\phantom{blah}+
		\overline{s}_{3}\otimes \overline{s}_{1} +
		\overline{s}_{11}\otimes \overline{s}_{2} + 
		\overline{s}_{21}\otimes \overline{s}_{1}.
	\end{align*}
	This tensor is $\xi^{(0)}$. It contains the term $\overline{s}_{1}\otimes \overline{s}_{3}$, so we eliminate it by setting
	\begin{align*}
		\xi^{(1)} &= \xi^{(0)} - \mathrm{CP}(\overline{s}_{1}\otimes \overline{s}_{3}) \\
		&= \xi^{(0)} - \overline{\Delta(s_{1})}(1\otimes \overline{s}_{3}) \\
		&= \xi^{(0)} - (1\otimes \overline{s}_{1} + \overline{s}_{1}\otimes 1 )(1\otimes \overline{s}_{3})\\
		&= \xi^{(0)} - (1\otimes \overline{s}_{1}\overline{s}_{3} + \overline{s}_{1}\otimes \overline{s}_{3} )\\
		&=1\otimes \overline{s}_{31} - 1\otimes \overline{s}_{1}\overline{s}_{3} + \overline{s}_{1}\otimes \overline{s}_{21} + \overline{s}_{2}\otimes \overline{s}_{2} +
		\overline{s}_{2}\otimes \overline{s}_{11}\\
		&\phantom{blah}+
		\overline{s}_{3}\otimes \overline{s}_{1} +
		\overline{s}_{11}\otimes \overline{s}_{2} + 
		\overline{s}_{21}\otimes \overline{s}_{1}.
	\end{align*}
	For the second iteration, use the term $\overline{s}_{2}\otimes \overline{s}_{2}$ occurring in $\xi^{(1)}$. We eliminate it with:
	\begin{align*}
		\xi^{(2)} &= \xi^{(1)} - \mathrm{CP}(\overline{s}_{2}\otimes \overline{s}_{2}) \\
		&= \xi^{(1)} - \overline{\Delta(s_{2})}(1\otimes \overline{s}_{2}) 
		\end{align*}
		\begin{align*}
		&= \xi^{(1)} - (1\otimes \overline{s}_2 + \overline{s}_1\otimes \overline{s}_1+ \overline{s}_2\otimes 1)(1\otimes \overline{s}_2) \\
		&= \xi^{(1)} - (1\otimes \overline{s}_2^2 + \overline{s}_1\otimes \overline{s}_1\overline{s}_2+ \overline{s}_2\otimes \overline{s}_2) \\
		&=1\otimes \overline{s}_{31} 
		- 1\otimes \overline{s}_{1}\overline{s}_{3} 
		- 1\otimes \overline{s}_2^2
		+ \overline{s}_{1}\otimes \overline{s}_{21} 
		- \overline{s}_1\otimes \overline{s}_1\overline{s}_2
		+\overline{s}_{2}\otimes \overline{s}_{11}\\
		&\phantom{blah}+
		\overline{s}_{3}\otimes \overline{s}_{1} +
		\overline{s}_{11}\otimes \overline{s}_{2} + 
		\overline{s}_{21}\otimes \overline{s}_{1}.
	\end{align*}
	In the third iteration, we eliminate the term $-\overline{s}_{1}\otimes \overline{s}_{1}\overline{s}_{2}$ occurring in $\xi^{(2)}$. Then
	\begin{align*}
		\xi^{(3)} &= \xi^{(2)} + \mathrm{CP}(\overline{s}_{1}\otimes \overline{s}_{1}\overline{s}_{2}) \\
		&= \xi^{(2)} + \overline{\Delta(s_{1})}(1\otimes \overline{s}_{1}\overline{s}_{2}) \\
		&= \xi^{(2)} + (1\otimes \overline{s}_1 + \overline{s}_1\otimes 1)(1\otimes \overline{s}_1\overline{s}_2) \\
		&= \xi^{(2)} + (1\otimes \overline{s}_1^2\overline{s}_2 + \overline{s}_1\otimes \overline{s}_1\overline{s}_2)\\
		&=1\otimes \overline{s}_{31} 
		- 1\otimes \overline{s}_{1}\overline{s}_{3} 
		- 1\otimes \overline{s}_2^2
		+ 1\otimes \overline{s}_1^2\overline{s}_2
		+ \overline{s}_{1}\otimes \overline{s}_{21} 
		+\overline{s}_{2}\otimes \overline{s}_{11}\\
		&\phantom{blah}+
		\overline{s}_{3}\otimes \overline{s}_{1} +
		\overline{s}_{11}\otimes \overline{s}_{2} + 
		\overline{s}_{21}\otimes \overline{s}_{1}.
	\end{align*}
	
	Continuing in this fashion, one eventually arrives at
	\begin{align*}
		&1\otimes \overline{s}_{31} 
		- 2(1\otimes \overline{s}_{1}\overline{s}_3) 
		- 2(1\otimes \overline{s}_1\overline{s}_{21}) 
		- (1\otimes \overline{s}_2^2)
		-2(1\otimes \overline{s}_{2}\overline{s}_{11})\\
		&\phantom{blah}
		+6(1\otimes \overline{s}_{1}^2\overline{s}_2)
		+3(1\otimes \overline{s}_1^2\overline{s}_{11})
		-3(1\otimes \overline{s}_1^4)
	\end{align*}
	at which point the algorithm terminates.
\end{example}

Now, we analyze this algorithm in general.
\begin{proposition}
	\label{clm:1}
	Algorithm~1 terminates after a finite number of steps. 
\end{proposition}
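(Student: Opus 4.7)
The plan is to exhibit a strictly decreasing monovariant on $\xi^{(i)}$ valued in a well-founded order. I would first verify two structural invariants preserved across iterations: every $\xi^{(i)}$ is a finite $\mathbb{Z}$-linear combination of formal expressions $\overline{s}_\lambda\otimes(\overline{s}_{\mu^{(1)}}\cdots\overline{s}_{\mu^{(k)}})$ with $k\geq 1$ and each $\mu^{(j)}$ a nonempty partition contained in $(n-r)^r$, and in each such term the total size $|\lambda|+|\mu^{(1)}|+\cdots+|\mu^{(k)}|$ equals $N$. These hold for $\xi^{(0)}$ because $\overline{\Delta(s_{\nu_B})}$ has its unique empty-right-factor summand subtracted off in Definition~\ref{def:cp}. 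They persist under the update since, expanding
\[\mathrm{CP}(\overline{s}_\lambda\otimes P)=\sum_{\rho,\tau}c^\lambda_{\rho,\tau}\,(\overline{s}_\rho\otimes\overline{s}_\tau P),\]
the summand with $(\rho,\tau)=(\lambda,\emptyset)$ has coefficient $c^\lambda_{\lambda,\emptyset}=1$ and exactly cancels the chosen term, while every other contributing $(\rho,\tau)$ satisfies $|\tau|>0$ and hence $|\rho|<|\lambda|$, producing a new formal expression with $k+1$ nonempty right factors whose sizes still sum to $N$.

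The decisive feature is the strict inequality $|\rho|<|\lambda|$: each iteration produces new terms strictly below the ``level'' $m:=|\lambda|$ of the eliminated one. Let
\[N_j(\xi)\coloneqq\#\bigl\{\text{formal expressions of the above shape with }|\lambda|=j\text{ and nonzero coefficient in }\xi\bigr\},\]
treating the right-hand monomial as an unordered multiset of nonempty partitions (with $\overline{s}_\emptyset=1$ identified formally). Associate to $\xi$ the vector $R(\xi)=(N_N(\xi),N_{N-1}(\xi),\ldots,N_1(\xi))\in\mathbb{N}^N$ under the lexicographic order with $N_N$ most significant. Since total size is fixed at $N$, only finitely many formal expressions occur at each level, so every $N_j(\xi^{(i)})$ is finite.

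One iteration at level $m\geq 1$ drops $N_m$ by exactly one, leaves $N_j$ for $j>m$ unchanged (no new terms arise at levels $\geq m$), and may alter $N_j$ for $j<m$ in either direction; hence $R(\xi^{(i)})<R(\xi^{(i-1)})$ strictly in lex order. Lex order on $\mathbb{N}^N$ is well-founded, so no infinite descending chain can occur and the algorithm must halt after finitely many iterations. The main obstacle I anticipate is bookkeeping: pinning down an unambiguous convention that identifies formal products $\overline{s}_{\mu^{(1)}}\cdots\overline{s}_{\mu^{(k)}}$ up to commutativity of multiplication (and the trivial-factor identification $\overline{s}_\emptyset=1$), so that the counts $N_j$ and the strict descent above are well-defined.
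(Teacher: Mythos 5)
Your proof is correct and follows essentially the same route as the paper's: both arguments rest on the observation that in the update step every newly created term has left factor of strictly smaller degree than the eliminated one, which is cancelled exactly, so a descent on left-factor degrees forces termination. Your lexicographic vector of level counts is simply a more carefully formalized version of the paper's count of tensors with left factor of degree at least $|\lambda|$, and the bookkeeping conventions you flag (unexpanded right factors, $\overline{s}_{\emptyset}=1$) match how the algorithm is meant to be read.
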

\begin{proof}
	To prove this claim, consider an iteration
	\begin{align*}
		\xi^{(m)}&=\xi^{(m-1)}-\gamma\mathrm{CP}(\overline{s}_{\lambda}\otimes (\overline{s}_{\mu^{(1)}}\cdots \overline{s}_{\mu^{(k)}}))\\
		&=\xi^{(m-1)}-\gamma \sum_{\rho,\tau \subseteq (n-r)^r} c_{\rho,\tau}^{\lambda} (\overline{s}_\rho\otimes (\overline{s}_{\mu^{(1)}}\cdots \overline{s}_{\mu^{(k)}}\overline{s}_\tau )).
	\end{align*}
	All tensors appearing with nonzero coefficient in the sum have left tensor factor $\overline{s}_{\rho}$ with $|\rho|<|\lambda|$ except for $\overline{s}_{\lambda}\otimes (\overline{s}_{\mu^{(1)}}\cdots \overline{s}_{\mu^{(k)}})$, which cancels with the identical term in $\xi^{(m-1)}$. Therefore the number of tensors $s_{\alpha}\otimes \bullet$ appearing in $\xi^{(m)}$ with $|\alpha|\geq |\lambda|$
	is strictly smaller than the number in $\xi^{(m-1)}$. 
	
	Hence for $i$ sufficiently large, the number of tensors $s_{\alpha}\otimes \bullet$ appearing in $\xi^{(m)}$ with $|\alpha|>1$ will be zero, at which point the algorithm terminates.
\end{proof}

\begin{proposition}
	\label{clm:2}
	The terms 
	\[\overline{s}_{\lambda}\otimes (\overline{s}_{\mu^{(1)}}\cdots \overline{s}_{\mu^{(k)}})\]
	occurring with nonzero coefficient in any $\xi^{(m)}$ have integral coefficients with sign $(-1)^{k+1}$.
\end{proposition}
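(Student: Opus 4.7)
My plan is to prove Proposition~\ref{clm:2} by induction on $i$, with the inductive hypothesis that every term in $\xi^{(i)}$ with nonzero coefficient, whose right tensor factor is a formal product of $k$ nontrivial Schur factors, carries an integer coefficient of sign $(-1)^{k+1}$. For the base case $i=0$, the expression
\[\xi^{(0)}=\overline{\Delta(s_{\nu_B})}-(\overline{s}_{\nu_B}\otimes 1)=\sum_{\lambda\subsetneq \nu_B}\sum_{\theta}c_{\lambda,\theta}^{\nu_B}(\overline{s}_\lambda\otimes \overline{s}_\theta)\]
displays only terms with $k=1$, since $\theta=\emptyset$ would force $|\lambda|=|\nu_B|$ and hence $\lambda=\nu_B$, which is excluded by $\lambda\subsetneq \nu_B$. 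Littlewood--Richardson coefficients are nonnegative integers, so every surviving coefficient has sign $+1=(-1)^{1+1}$, establishing the base case.

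For the inductive step, suppose the hypothesis holds in $\xi^{(i-1)}$ and the algorithm selects a nonzero term $\gamma(\overline{s}_\lambda\otimes P)$ with $P=\overline{s}_{\mu^{(1)}}\cdots \overline{s}_{\mu^{(k)}}$, $|\lambda|>0$, and $k\geq 1$; by hypothesis $\gamma\in\mathbb{Z}$ has sign $(-1)^{k+1}$. The quantity subtracted from $\xi^{(i-1)}$ is
\[\gamma\,\mathrm{CP}(\overline{s}_\lambda\otimes P)=\sum_{\rho,\tau\subseteq (n-r)^r}\gamma\,c_{\rho,\tau}^{\lambda}(\overline{s}_\rho\otimes P\overline{s}_\tau).\]
I would split this sum according to whether $\tau$ is empty. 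The $\tau=\emptyset$ piece collapses via $c_{\rho,\emptyset}^{\lambda}=\delta_{\rho,\lambda}$ and $P\cdot 1=P$ to exactly $\gamma(\overline{s}_\lambda\otimes P)$, so subtraction removes precisely the selected term without introducing any new $k$-factor contributions. When $\tau\neq\emptyset$, the formal product $P\overline{s}_\tau$ has $k+1$ nontrivial factors, and the contribution $-\gamma\,c_{\rho,\tau}^{\lambda}(\overline{s}_\rho\otimes P\overline{s}_\tau)$ is an integer multiple with sign $-(-1)^{k+1}=(-1)^{(k+1)+1}$, using $c_{\rho,\tau}^{\lambda}\geq 0$.

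The final step is to handle merging: if $\xi^{(i-1)}$ already contains a term whose right-factor multiset matches one of the newly created $(k+1)$-factor contributions, the induction hypothesis gives that existing coefficient sign $(-1)^{(k+1)+1}$, the same sign as the new contribution, so the resulting sum is either zero or an integer of sign $(-1)^{(k+1)+1}$. All terms with other factor counts remain untouched and satisfy the hypothesis by assumption, so $\xi^{(i)}$ satisfies the claim, completing the induction. The only real delicacy is recognizing that the $\tau=\emptyset$ summand is precisely the one that cancels the selected term, so every genuinely new contribution has factor count $k+1$ and undergoes exactly one sign flip; beyond that point the argument reduces to routine sign tracking based on Littlewood--Richardson nonnegativity.
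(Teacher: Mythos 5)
Your induction is correct and follows the same idea as the paper's (terse) proof: integrality comes from sums and products of Littlewood--Richardson coefficients, and the sign flips exactly once each time an update appends a new factor to the right tensor component, with merging only ever combining like-signed terms. Your explicit treatment of the $\tau=\emptyset$ summand cancelling the selected term and of the merging step simply fills in details the paper leaves implicit.
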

\begin{proof}
	That the coefficients are integral holds since they arise from repeated sums, differences, and products of Littlewood--Richardson coefficients, which are integers. By the construction of Algorithm 1, the signs start positive in $\xi^{(0)}$ and change each time a new factor is added to the right of the tensor in an update $\xi^{(m-1)}\to \xi^{(m)}$. 
\end{proof}

\begin{proposition}
	\label{clm:3}
	The output of Algorithm~1 does not depend on the choice of term in each iteration.
\end{proposition}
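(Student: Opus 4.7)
The plan is to interpret each state $\xi^{(i)}$ as an element of $\overline{\Lambda}\otimes\overline{\Lambda}$ by expanding the formal products $\overline{s}_{\mu^{(1)}}\cdots \overline{s}_{\mu^{(k)}}$ on the right tensor factor into honest elements of $\overline{\Lambda}$. Under this interpretation, the rewrite rule $\xi^{(i)}=\xi^{(i-1)}-\gamma_i\mathrm{CP}(\overline{s}_{\lambda_i}\otimes Y_i)$ is an actual equality in $\overline{\Lambda}\otimes\overline{\Lambda}$, and Proposition~\ref{clm:3} reduces to showing that the image of $\xi^{(\text{final})}$ in $\overline{\Lambda}\otimes\overline{\Lambda}$ is uniquely determined by $\xi^{(0)}$, independent of the reduction path.

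Decompose $\overline{\Lambda}\otimes\overline{\Lambda}=Z\oplus P$ with $Z=1\otimes\overline{\Lambda}$, $P=\overline{\Lambda}_+\otimes\overline{\Lambda}$ (where $\overline{\Lambda}_+$ denotes the span of $\overline{s}_\rho$ with $|\rho|>0$), and let $\pi_P,\pi_Z$ be the projections. Each $\overline{s}_{\lambda_i}\otimes Y_i$ selected by the algorithm lies in $P$ by construction, so $\xi^{(\text{final})}-\xi^{(0)}\in \mathrm{CP}(P)$, while the termination condition places $\xi^{(\text{final})}\in Z$. Thus $\xi^{(\text{final})}\in(\xi^{(0)}+\mathrm{CP}(P))\cap Z$, and uniqueness of the output will follow once we show the claim $\mathrm{CP}(P)\cap Z=\{0\}$, equivalently that $\pi_P\circ\mathrm{CP}|_P\colon P\to P$ is injective.

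To verify this injectivity, observe that for a basis element $\overline{s}_\lambda\otimes\overline{s}_\mu\in P$ (so $|\lambda|>0$) the second clause of Definition~\ref{def:cp} gives
\[\mathrm{CP}(\overline{s}_\lambda\otimes\overline{s}_\mu)=\overline{\Delta s_\lambda}\,(1\otimes\overline{s}_\mu)=\sum_{\rho,\tau}c^\lambda_{\rho,\tau}\,(\overline{s}_\rho\otimes\overline{s}_\tau\overline{s}_\mu),\]
in which the $(\rho,\tau)=(\lambda,0)$ contribution reproduces $\overline{s}_\lambda\otimes\overline{s}_\mu$ itself, whereas every other nonzero contribution has $|\rho|<|\lambda|$. (That only this case of Definition~\ref{def:cp} is needed follows from a small check that the Schur expansion of a product of $\overline{s}_\mu$'s with $|\mu|>0$ never involves $\overline{s}_0$, so no term with $Y_i$ expanding to a multiple of $1$ arises during the run.) Filtering $P$ by left-factor degree, $\pi_P\circ\mathrm{CP}|_P$ becomes upper unitriangular, and hence injective, on the finite-dimensional degree-$N$ piece in which $\xi^{(0)}$ sits. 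Consequently $\xi^{(\text{final})}$ equals the unique element $\pi_Z(\xi^{(0)})-\pi_Z(\mathrm{CP}(\eta))$ of $Z$, where $\eta\in P$ is determined by $\pi_P(\mathrm{CP}(\eta))=\pi_P(\xi^{(0)})$; this depends only on $\xi^{(0)}$, proving the proposition. The one point requiring care is ensuring that the algorithm's rewrite genuinely is an equation in $\overline{\Lambda}\otimes\overline{\Lambda}$ rather than a merely formal manipulation of unexpanded products, which is exactly what the preliminary observation above secures.
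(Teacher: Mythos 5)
Your proof is correct, but it takes a genuinely different route from the paper's. The paper works at the level of the unexpanded formal expressions: it invokes the sign-coherence of Proposition~\ref{clm:2} to rule out any cancellation in an update other than that of the term being eliminated, concludes that every term with positive left degree must eventually be removed by an update of the same form, and deduces order-independence in a confluence-style way. You instead pass to values in $\overline{\Lambda}\otimes\overline{\Lambda}$: every terminating run ends in $(\xi^{(0)}+\mathrm{CP}(P))\cap Z$ where $P=\overline{\Lambda}_+\otimes\overline{\Lambda}$ and $Z=1\otimes\overline{\Lambda}$, and this intersection contains at most one point because $\pi_P\circ\mathrm{CP}|_P$ is unitriangular for the filtration of $P$ by the degree of the left tensor factor (the $(\rho,\tau)=(\lambda,\emptyset)$ term of $\overline{\Delta(s_\lambda)}$ reproduces the basis vector, the $(\emptyset,\lambda)$ term is killed by $\pi_P$, and all other terms strictly drop the left degree), hence injective. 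The one delicate point --- that the algorithm's rewrites, which keep right-hand products unexpanded, are honest identities in $\overline{\Lambda}\otimes\overline{\Lambda}$ compatible with the linear map $\mathrm{CP}$ of Definition~\ref{def:cp}, because an expanded right factor never contains $\overline{s}_{(0)}$ and so only the second clause of the definition is ever invoked --- is exactly the point needing care, and you address it. Your argument does not use Proposition~\ref{clm:2} at all, avoids the cancellation bookkeeping, and characterizes $\mathrm{Reduce}_{\varphi}(\nu)$ intrinsically as the unique element of $Z$ congruent to $\xi^{(0)}$ modulo $\mathrm{CP}(P)$; since $\mathrm{Reduce}_{\varphi}(\nu)$ is defined as an element of $\overline{\Lambda}\otimes\overline{\Lambda}$ and is only used through its value (in Proposition~\ref{prop:hopf-identity-application} and Lemma~\ref{lem:tall-dependent-on-wide}), this is the statement actually needed downstream. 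What the paper's formal-level argument buys in exchange is the marginally stronger fact that even the unexpanded expression (the multiset of terms with products kept unevaluated) is order-independent. Two minor quibbles: injectivity of $\pi_P\circ\mathrm{CP}|_P$ implies, but is not literally equivalent to, $\mathrm{CP}(P)\cap Z=\{0\}$ (you prove the stronger injectivity, so nothing is lost), and your uniqueness argument tacitly uses termination of every run, which is supplied by Proposition~\ref{clm:1} and worth citing explicitly.
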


\begin{proof}It follows from Proposition~\ref{clm:2} that during the algorithm, the only cancellation that occurs in an update
\begin{equation}
\label{eqn:theupdate}
\xi^{(m)}=\xi^{(m-1)}-\gamma\mathrm{CP}(\overline{s}_{\lambda}\otimes (\overline{s}_{\mu^{(1)}}\cdots \overline{s}_{\mu^{(k)}}))
\end{equation}
is the cancellation of the term $\gamma\overline{s}_{\lambda}\otimes (\overline{s}_{\mu^{(1)}}\cdots \overline{s}_{\mu^{(k)}})$ in $\xi^{(m-1)}$, and its negative in $\gamma\mathrm{CP}(\overline{s}_{\lambda}\otimes (\overline{s}_{\mu^{(1)}}\cdots \overline{s}_{\mu^{(k)}}))$. Moreover, the algorithm only can
eliminate a term $\gamma\overline{s}_{\lambda}\otimes (\overline{s}_{\mu^{(1)}}\cdots \overline{s}_{\mu^{(k)}})$ with $|\lambda|>1$ by the update step
(\ref{eqn:theupdate}). Therefore the order in which we compute the update steps (\ref{eqn:theupdate}) does not effect the output.\end{proof}

\begin{definition}
	Given a valid tuple $\varphi$ and a tall partition $\nu\in\decomp_{\varphi}$, let $\mathrm{Reduce}_{\varphi}(\nu)\in\overline{\Lambda}\otimes \overline{\Lambda}$ denote the output of Algorithm~1 .
\end{definition}
\begin{example}
	Continuing Example \ref{exp:coproduct-process} with $\nu=(3,3,3,3,1)$, 
	\begin{align*}
		\mathrm{Reduce}_{\varphi}(\nu)&=
		1\otimes \overline{s}_{31} 
		- 2(1\otimes \overline{s}_{1}\overline{s}_3) 
		- 2(1\otimes \overline{s}_1\overline{s}_{21}) 
		- (1\otimes \overline{s}_2^2)
		-2(1\otimes \overline{s}_{2}\overline{s}_{11})\\
		&\phantom{blah}+6(1\otimes \overline{s}_{1}^2\overline{s}_2)
		+3(1\otimes \overline{s}_1^2\overline{s}_{11})
		-3(1\otimes \overline{s}_1^4).
	\end{align*}
	Simplifying, we obtain
	\begin{align*}
		\mathrm{Reduce}_{\varphi}(\nu) &= 1\otimes 
		\left(
		\overline{s}_{31}
		- 2 \overline{s}_{1}\overline{s}_3 
		- 2 \overline{s}_1\overline{s}_{21} 
		-  \overline{s}_2^2
		-2 \overline{s}_{2}\overline{s}_{11}
		+6 \overline{s}_{1}^2\overline{s}_2
		+3\overline{s}_1^2\overline{s}_{11}
		-3\overline{s}_1^4
		\right)\\
		&=-(1\otimes \overline{s}_{211}).
	\end{align*}
	Notice $211$ is the conjugate of $\nu_B=31$.
\end{example}

Denote the conjugate of a partition $\tau$ by $\tau'$. In general, we have:
\begin{proposition}
	\label{prop:hopf-identity-application}
	Let $\nu\in \decomp_{\varphi}$ be tall with $\nu_B\neq \emptyset$.
	Then 
	\[\mathrm{Reduce}_{\varphi}(\nu) =
		(-1)^{|\nu_B|+1}(1\otimes \overline{s}_{(\nu_B)'}). \]
\end{proposition}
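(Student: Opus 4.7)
My plan is to recognize Algorithm~\ref{alg} as an implementation of the antipode of the Hopf algebra $\Lambda$ of symmetric functions. Recall that $\Lambda$ is graded connected and its antipode $S\colon \Lambda\to\Lambda$ is the ring involution with $S(s_\lambda) = (-1)^{|\lambda|} s_{\lambda'}$; equivalently, $S$ is characterized by the antipode axiom
\[
 m\circ (S\otimes \mathrm{id})\circ \Delta = \eta \circ \epsilon,
\]
where $\eta,\epsilon$ are the unit and counit of $\Lambda$, and $\epsilon(s_\lambda)=0$ for $|\lambda|>0$. Since $-S(s_{\nu_B}) = (-1)^{|\nu_B|+1} s_{(\nu_B)'}$, the target formula reads $\mathrm{Reduce}_\varphi(\nu) = -\bigl(1\otimes \overline{S(s_{\nu_B})}\bigr)$; my strategy is to verify this by lifting the algorithm to $\Lambda\otimes\Lambda$ and exploiting the Hopf structure there.

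First the lift: I would perform the same sequence of updates as in Algorithm~\ref{alg}, but with $\Delta(s_\lambda)$ in place of $\overline{\Delta(s_\lambda)}$, producing tensors $\tilde\xi^{(i)}\in\Lambda\otimes\Lambda$ that project onto $\xi^{(i)}$ at every step. Every left-factor partition appearing is a sub-partition of $\nu_B\subseteq i^j\subseteq (n-r)^r$, so the lift is well-defined; termination is inherited from Proposition~\ref{clm:1}, whose argument only uses left-factor degrees; and by Proposition~\ref{clm:3}, the projection of $\tilde\xi^{(\mathrm{final})}$ equals $\mathrm{Reduce}_\varphi(\nu)$.

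The key step, where the Hopf structure enters, is to establish the invariant $\Phi(\tilde\xi^{(i)}) = -S(s_{\nu_B})$ for every $i$, where $\Phi \coloneqq m\circ(S\otimes \mathrm{id})\colon\Lambda\otimes\Lambda\to\Lambda$. The base case is immediate from the antipode axiom and $\epsilon(s_{\nu_B})=0$:
\[
 \Phi(\tilde\xi^{(0)}) = \Phi(\Delta(s_{\nu_B})) - \Phi(s_{\nu_B}\otimes 1) = \epsilon(s_{\nu_B})\cdot 1 - S(s_{\nu_B}) = -S(s_{\nu_B}).
\]
The inductive step rests on the one-line identity
\[
 \Phi\bigl(\Delta(s_\lambda)(1\otimes P)\bigr) = \bigl(m(S\otimes \mathrm{id})\Delta(s_\lambda)\bigr)\cdot P = \epsilon(s_\lambda)\cdot P = 0 \qquad (|\lambda|>0),
\]
which shows that every update in the while loop preserves $\Phi$ (all such updates have $|\lambda|>0$ and $P\neq 1$).

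At termination, every tensor in $\tilde\xi^{(\mathrm{final})}$ has trivial left factor, so $\tilde\xi^{(\mathrm{final})} = 1\otimes Q$ for some $Q\in\Lambda$; applying $\Phi$ yields $Q = -S(s_{\nu_B}) = (-1)^{|\nu_B|+1}s_{(\nu_B)'}$. Projecting back to $\overline{\Lambda}\otimes\overline{\Lambda}$ then gives the claimed formula. The main obstacle is spotting the correct Hopf-algebraic invariant; once it is in place, the proof collapses into a single application of the antipode axiom.
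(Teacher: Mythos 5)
Your proof is correct, and it reaches the conclusion by a genuinely different route than the paper. The paper proceeds by induction on $|\nu_B|$: it uses the order-independence of Algorithm~1 (Proposition~\ref{clm:3}) to get a ``depth-first'' recurrence for $\mathrm{Reduce}_\varphi$, applies the inductive hypothesis to each first-level term of $\overline{\Delta(s_{\nu_B})}$, and then finishes with the single identity $\sum_{\lambda,\theta}(-1)^{|\lambda|}c_{\lambda,\theta}^{\nu_B}\,\overline{s}_{\lambda'}\overline{s}_{\theta}=0$, which is Lemma~\ref{lem:hopf-identity}, i.e.\ the antipode axiom evaluated on $s_{\nu_B}$ and then projected to $\overline{\Lambda}$. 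You instead dispense with induction entirely: you lift the run of the algorithm to $\Lambda\otimes\Lambda$ and show that $\Phi=\nabla\circ(S\otimes\mathrm{id})$ is a loop invariant, with value $-S(s_{\nu_B})$ from the start (antipode axiom plus $\epsilon(s_{\nu_B})=0$) and unchanged by every update (since $\Phi\bigl(\Delta(s_\lambda)(1\otimes P)\bigr)=\epsilon(s_\lambda)P=0$ for $|\lambda|>0$); at termination the left factors are trivial, so $\Phi$ simply reads off $Q=-S(s_{\nu_B})$. The Hopf-algebraic input is the same in both arguments, but your organization makes it transparent that Algorithm~1 is literally computing the antipode of $s_{\nu_B}$, and it avoids both the induction bookkeeping (the auxiliary tuples $\varphi^i$) and any reliance on the recursive structure beyond the well-definedness of the output; the paper's version, in exchange, stays entirely inside $\overline{\Lambda}\otimes\overline{\Lambda}$ and isolates the Hopf content in one clean projected identity. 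Two small points to tidy: the containment you cite for the lift should be $\nu_B\subseteq b^a$ (not $i^j$), though the conclusion you need, $\nu_B\subseteq (n-r)^r$, still follows from $b\leq i\leq n-r$ and $a\leq r-j<r$, so no left factor is killed by the projection; and it is worth saying explicitly that the projected lifted run is term-by-term a legal run of Algorithm~1 (all left factors stay inside the box and the right factors are carried formally), so its output is $\mathrm{Reduce}_\varphi(\nu)$ by Proposition~\ref{clm:3}, exactly as you indicate.
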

\begin{proof}
	We proceed by induction on $N=|\nu_B|$. The base case is $|\nu_B|=1$. In this case, Algorithm~1 outputs $\xi^{(0)}=\mathrm{CP}(\overline{s}_1\otimes 1)$, so 
	\[\mathrm{Reduce}_{\varphi}(\nu)=\xi^{(0)}=\mathrm{CP}(\overline{s}_1\otimes 1) = \sum_{|\theta|=1} c_{(0),\theta}^{(1)} (1\otimes \overline{s}_{\theta}) = 1\otimes \overline{s}_1.\]
	Suppose $|\nu_B|=k>1$. Construct the tensor $\xi^{(0)}$ from $\nu$, so
	\begin{align*}
		\xi^{(0)}&=\mathrm{CP}(\overline{s}_{\nu_B}\otimes 1)\\ 
		 	 	 &=\sum_{\lambda\subsetneq \nu_B}\sum_{\substack{\theta\subseteq (n-r)^r\\|\theta|=N-|\lambda|}} c_{\lambda,\theta}^{\nu_B} (\overline{s}_\lambda\otimes \overline{s}_{\theta}).
	\end{align*}
	Since there are finitely many nonzero terms occurring in the double sum, let us list them out explicitly as
	\[
		\xi^{(0)}= (1\otimes \overline{s}_{\nu_B})+\sum_{p=1}^L c_{\lambda^p,\theta^p}^{\nu_B}(\overline{s}_{\lambda^p}\otimes \overline{s}_{\theta^p}),
	\]
	where $1\leq |\lambda^p|<k$ for each $p$. Set $\varphi^{p} = (n,r,i,j,a,b,|\lambda^p|)$ for each $p$. 
	From Proposition \ref{clm:3}, it follows that $\mathrm{Reduce}_{\varphi}$ satisfies the ``depth-first search'' recurrence
	\[\mathrm{Reduce}_{\varphi}(\nu) = (1\otimes \overline{s}_{\nu_B}) - \sum_{p=1}^L c_{\lambda^p,\theta^p}^{\nu_B} \mathrm{Reduce}_{\varphi}(i^j,\lambda^p)(1\otimes \overline{s}_{\theta^p}).\]
	By the induction assumption,
	\[\mathrm{Reduce}_{\varphi^{p}}(i^j,\lambda^p)(1\otimes \overline{s}_{\theta^p}) = (-1)^{|\lambda^p|+1}(1\otimes \overline{s}_{(\lambda^p)'}\overline{s}_{\theta^p}) \text{ for each }1\leq p\leq L. \]
	Thus,
	\begin{align*}
		\mathrm{Reduce}_{\varphi}(\nu) &= (1\otimes \overline{s}_{\nu_B}) + \sum_{p=1}^L c_{\lambda^p,\theta^p}^{\nu_B} (-1)^{|\lambda^p|}(1\otimes \overline{s}_{(\lambda^p)'}\overline{s}_{\theta^p})\\
		&=(1\otimes \overline{s}_{\nu_B}) +\sum_{\substack{|\lambda|\geq 1\\ \lambda\subsetneq \nu_B}}\sum_{\substack{\theta\subseteq (n-r)^r\\|\theta|=N-|\lambda|}} c_{\lambda,\theta}^{\nu_B} (-1)^{|\lambda|}(1\otimes \overline{s}_{\lambda'}\overline{s}_{\theta})\\
		&=\sum_{\lambda\subsetneq \nu_B}\sum_{\substack{\theta\subseteq (n-r)^r\\|\theta|=N-|\lambda|}} c_{\lambda,\theta}^{\nu_B} (-1)^{|\lambda|}(1\otimes \overline{s}_{\lambda'}\overline{s}_{\theta}).
	\end{align*}	
	The lemma then follows if one can prove that
	\[\sum_{\lambda,\theta}(-1)^{|\lambda|}c_{\lambda,\theta}^{\nu_B}\overline{s}_{\lambda'}\overline{s}_{\theta}=0.\]
	
	In Lemma \ref{lem:hopf-identity} below, we prove an analogous identity holds in $\Lambda$.
	Projecting from $\Lambda$ down to $\overline{\Lambda}$ via the (linear) quotient map $s_\lambda\mapsto \overline{s}_{\lambda}$ completes the proof.	
\end{proof}

\begin{lemma}
	\label{lem:hopf-identity}
	For any partitions $\lambda,\mu$ and any partition $\nu\neq \emptyset$,
	\[\sum_{\lambda,\mu}(-1)^{|\lambda|}c_{\lambda,\mu}^{\nu}s_{\lambda'}s_{\mu}=0.\]
\end{lemma}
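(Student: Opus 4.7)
The plan is to recognize Lemma~\ref{lem:hopf-identity} as an immediate consequence of the antipode axiom for the Hopf algebra $\Lambda$. Recall that $\Lambda$ is a graded connected Hopf algebra whose product $m$ is ordinary multiplication, whose coproduct $\Delta$ is the one recalled in Section~\ref{sec:Hopf}, whose counit $\epsilon : \Lambda \to \mathbb{Z}$ is projection onto the degree-zero part, and whose unit $u : \mathbb{Z} \to \Lambda$ is the inclusion of constants. The antipode $S : \Lambda \to \Lambda$ is characterized by the identity
$$m \circ (S \otimes \mathrm{id}) \circ \Delta \;=\; u \circ \epsilon \;=\; m \circ (\mathrm{id} \otimes S) \circ \Delta.$$

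The key input I would invoke is the well-known formula $S(s_\lambda) = (-1)^{|\lambda|}\, s_{\lambda'}$ on the Schur basis. This can be derived from $S(h_n) = (-1)^n e_n$ via the Jacobi--Trudi identity, or equivalently from the relation $S = (-1)^{\deg}\,\omega$ together with the formula $\omega(s_\lambda) = s_{\lambda'}$ already used in the proof of Proposition~\ref{prop:eq-to-tensors}. I would simply cite this as standard rather than re-derive it.

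With these ingredients the proof is essentially one line. Apply the antipode axiom to $s_\nu$ and expand using the definition of $\Delta$:
$$\sum_{\lambda,\mu} c_{\lambda,\mu}^{\nu}\, S(s_\lambda)\, s_\mu \;=\; \epsilon(s_\nu)\cdot 1.$$
Substituting $S(s_\lambda)=(-1)^{|\lambda|} s_{\lambda'}$ on the left gives precisely the sum in the statement. For any $\nu$ with $|\nu|\geq 1$ the right-hand side vanishes, which is the claim. (The degenerate case $\nu=\emptyset$ does not arise in the application, since Proposition~\ref{prop:hopf-identity-application} is invoked with $\nu_B$ satisfying $|\nu_B|=N\geq 1$.) There is no real obstacle here: the whole point of the lemma is that the intricate cancellations Algorithm~1 produces are, from a higher vantage point, nothing other than the antipode axiom for $\Lambda$ applied to $s_{\nu_B}$.
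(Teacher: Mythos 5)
Your proof is correct and is essentially identical to the paper's own argument: both apply the antipode axiom $\nabla\circ(S\otimes\mathrm{id})\circ\Delta=\eta\circ\epsilon$ to $s_\nu$, using $S(s_\lambda)=(-1)^{|\lambda|}s_{\lambda'}$ and $\epsilon(s_\nu)=0$. Your explicit remark about the degenerate case $\nu=\emptyset$ is a minor point of added care not present in the paper, but otherwise the two proofs coincide.
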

\begin{proof}
	Recall the Hopf algebra structure on $\Lambda$ over $\mathbb{Z}$. This includes
	\begin{itemize}
		\item the usual multiplication map $\nabla:\Lambda\otimes \Lambda\to \Lambda$ with 
		\[\nabla(s_{\lambda}\otimes s_{\mu}) = s_{\lambda}s_{\mu};\]
		\item the coproduct described previously, with $\Delta:\Lambda\to\Lambda\otimes\Lambda$ with 
		\[\Delta(s_\nu) = \sum_{\lambda,\mu}c_{\lambda,\mu}^{\nu}s_{\lambda}\otimes s_{\nu}; \]
		\item the unit $\eta:\mathbb{Z}\to \Lambda$, the ring homomorphism with $1\mapsto 1$;
		\item the counit $\epsilon:\Lambda\to\mathbb{Z}$ taking $f\in \lambda$ to its constant term $f(0,0,\ldots)$; 
		\item the antipode $S:\Lambda\to\Lambda$ with 
		\[S(s_{\lambda})=(-1)^{|\lambda|}s_{\lambda'}.\]
	\end{itemize}
	The antipode $S$ is characterized by the commutative diagram shown in Figure \ref{fig:diag}.
	\begin{figure}[ht]
		\[
			\begin{tikzcd}[row sep=3.6em,column sep=1em]
				& \Lambda\otimes \Lambda \arrow[rr,"S\otimes\mathrm{id}"] && \Lambda\otimes \Lambda \arrow[dr,"\nabla"] \\
				\Lambda \arrow[ur,"\Delta"] \arrow[rr,"\varepsilon"] \arrow[dr,"\Delta"'] && \mathbb{Z} \arrow[rr,"\eta"] && \Lambda \\
				& \Lambda\otimes \Lambda \arrow[rr,"\mathrm{id}\otimes S"'] && \Lambda\otimes \Lambda \arrow[ur,"\nabla"']
			\end{tikzcd}
		\]
	\caption{The Hopf algebra stucture on $\Lambda$}
	\label{fig:diag}
	\end{figure}
	On a Schur function $s_{\nu}$, we compute
	\begin{align*}
		\eta\circ\epsilon(s_{\nu}) &= \eta(0) = 0,\quad\mbox{and}\\
		\nabla\circ(S\otimes \mathrm{id})\circ\Delta(s_{\nu})&=\nabla\left( \sum_{\lambda,\mu}c_{\lambda,\mu}^{\nu}S(s_{\lambda})\otimes s_{\nu}\right) = \sum_{\lambda,\mu}c_{\lambda,\mu}^{\nu} (-1)^{|\lambda|}s_{\lambda'}s_\mu. \qedhere
	\end{align*}
\end{proof}

\section{Conclusion of the proof} \label{sec:conclusion}

\begin{definition}
	Let $W$ be the subspace 
	\[W= \mathrm{Span}_{\mathbb{Z}}\left\{\linear_{\varphi}(\rho)\mid \rho\in\decomp_{\varphi} \text{ is wide} \right\}\]
\end{definition}

Recall the linear isomorphism $T:V\to \overline{\Lambda}\otimes \overline{\Lambda}$ defined by $T(A_{\lambda,\theta}) = \overline{s}_{\lambda}\otimes \overline{s}_{\theta}$.
\begin{lemma}
	\label{lem:tall-dependent-on-wide}
	Let $\varphi=(n,r,i,j,a,b,N)$ be valid with $N\leq \min(j,n-r-i)$. Then 
	$\linear_{\varphi}(\nu)\in W$
	for any tall $\nu\in \decomp_{\varphi}$.
\end{lemma}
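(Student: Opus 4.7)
The plan is to run Algorithm~1 on $\nu$ and reinterpret its output, together with all intermediate subtractions, as a $\mathbb{Z}$-linear combination of $T(\linear_\varphi(\rho))$ for wide decomposable $\rho$. Since $T(\linear_\varphi(\nu)) = \mathrm{CP}(\overline{s}_{\nu_B}\otimes 1)$ by Proposition~\ref{prop:eq-to-tensors} and $T$ is a linear isomorphism, this will place $\linear_\varphi(\nu)$ in $W$.

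Chaining the updates of Algorithm~1 yields the identity
\[
\mathrm{CP}(\overline{s}_{\nu_B}\otimes 1) \;=\; \mathrm{Reduce}_\varphi(\nu) \;+\; \sum_{s}\gamma_s\,\mathrm{CP}\bigl(\overline{s}_{\lambda^s}\otimes(\overline{s}_{\mu^{(1)}_s}\cdots\overline{s}_{\mu^{(k_s)}_s})\bigr),
\]
where $\gamma_s\in\mathbb{Z}$, $|\lambda^s|\geq 1$, $k_s\geq 1$, and each $\mu^{(t)}_s$ is a nonempty partition. By Proposition~\ref{prop:hopf-identity-application}, $\mathrm{Reduce}_\varphi(\nu) = (-1)^{N+1}(1\otimes \overline{s}_{(\nu_B)'})$. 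Let $\rho^{*}$ denote the partition with $\rho^{*}_B = \emptyset$ and $\rho^{*}_R = (\nu_B)'$: the validity bounds $b\leq j$ and $a\leq n-r-i$ ensure $(\nu_B)'\subseteq a^b\subseteq (n-r-i)^j$, so $\rho^{*}$ is wide decomposable, and the second case of Definition~\ref{def:cp} gives $T(\linear_\varphi(\rho^{*})) = \mathrm{CP}(1\otimes \overline{s}_{(\nu_B)'}) = 1\otimes \overline{s}_{(\nu_B)'}$. Thus $\mathrm{Reduce}_\varphi(\nu) = (-1)^{N+1}\,T(\linear_\varphi(\rho^{*}))$ is already a wide term.

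For each remaining summand, I expand the right tensor factor in the Schur basis and exploit two structural features of the algorithm: (i)~every $\lambda^s$ satisfies $\lambda^s\subseteq \nu_B\subseteq b^a$, since each update replaces a $\lambda$ by some $\rho\subseteq \lambda$; and (ii)~each update preserves the total degree $N$, so any $\theta$ appearing in the Schur expansion of the accumulated product obeys $|\theta| = N - |\lambda^s| \leq N-1$. The hypothesis $N\leq \min(j,n-r-i)$ then forces $\theta_1\leq N-1 < n-r-i$ and $\ell(\theta)\leq N-1 < j$, so $\theta\subseteq (n-r-i)^j$; since products of nonempty Schur functions have no constant term, only $\theta\neq\emptyset$ contribute. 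For each such term, the pair $(\lambda^s,\theta)$ defines a wide decomposable $\rho$ with $\rho_B = \lambda^s$ and $\rho_R = \theta$, and Proposition~\ref{prop:eq-to-tensors} identifies $\mathrm{CP}(\overline{s}_{\lambda^s}\otimes \overline{s}_\theta) = T(\linear_\varphi(\rho))$. Assembling these identifications and applying $T^{-1}$ expresses $\linear_\varphi(\nu)$ as a $\mathbb{Z}$-linear combination of $\{\linear_\varphi(\rho)\mid \rho \in \decomp_\varphi\text{ wide}\}$, which finishes the proof.

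The main obstacle is exactly the shape control in the previous paragraph: I need every $\theta$ surfacing from the algorithm's accumulated Schur products to land inside $(n-r-i)^j$, because otherwise $(\lambda^s,\theta)$ need not correspond to a decomposable partition. The hypothesis $N\leq \min(j,n-r-i)$ is tailored precisely to this, with the argument hinging on the elementary bound $|\theta|\leq N-1$ (inherited from $|\lambda^s|\geq 1$ together with the degree preservation of each update).
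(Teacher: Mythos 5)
Your proposal is correct and follows essentially the same route as the paper: chain the updates of Algorithm~1, apply Proposition~\ref{prop:hopf-identity-application} to identify $\mathrm{Reduce}_\varphi(\nu)=(-1)^{N+1}(1\otimes\overline{s}_{(\nu_B)'})$ with (a multiple of) the wide vector coming from $\rho^*$ with $\rho^*_B=\emptyset$, $\rho^*_R=(\nu_B)'$, and expand each intermediate difference $\xi^{(i)}-\xi^{(i-1)}$ in the Schur basis, using $N\leq\min(j,n-r-i)$ to force the resulting $\theta$ into $(n-r-i)^j$ so that each $\mathrm{CP}(\overline{s}_{\lambda^s}\otimes\overline{s}_\theta)$ is $T(\linear_\varphi(\rho))$ for a wide decomposable $\rho$. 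Your shape-control argument (degree preservation, $\lambda^s\subseteq\nu_B\subseteq b^a$, nonempty right factor) matches the paper's, so no further comment is needed.
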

\begin{proof}
	It is enough to show the analogous statement for tensors:
	\[\mathrm{CP}(\overline{s}_{\nu_B}\otimes \overline{s}_{\nu_R})\in \mathrm{Span}_{\mathbb{Z}}\left\{\mathrm{CP}(\overline{s}_{\rho_B}\otimes \overline{s}_{\rho_R})\mid \rho\in\decomp_{\varphi} \text{ is wide} \right\} = T(W).\]
	From $\varphi$ and $\nu$, suppose Algorithm~1 produced $\xi^{(0)},\ldots,\xi^{(K)}=		\mathrm{Reduce}_{\varphi}(\nu)$. Then
	\begin{align*}
		\mathrm{CP}(\overline{s}_{\nu_B}\otimes \overline{s}_{\nu_R})&=\xi^{(0)}\\
		&=\xi^{(K)} -\sum_{m=1}^K (\xi^{(m)}-\xi^{(m-1)})
	\end{align*}
	By Proposition \ref{prop:hopf-identity-application},	
	\[\xi^{(K)}= (-1)^{|\nu_B|+1}(1\otimes \overline{s}_{(\nu_B)'}).\]
	We have $\nu_B'\subseteq (b^a)'=a^b$. Since $a+i\leq n-r$ and $b\leq j$, it follows that $\xi^{(K)}\in T(W)$.
	
	It remains to show that each term $(\xi^{(m)}-\xi^{(m-1)})\in T(W)$. By definition,
	\[\xi^{(m)}-\xi^{(m-1)} = -\gamma\mathrm{CP}(\overline{s}_{\lambda}\otimes (\overline{s}_{\mu^{(1)}}\cdots \overline{s}_{\mu^{(k)}})) \]
	for some $\gamma\in \mathbb{Z}$ and partitions $\lambda,\mu^{(1)},\ldots,\mu^{(k)}$.
	One observes from Algorithm~1 that \[|\lambda|+|\mu^{(1)}|+\cdots+|\mu^{(k)}|=N.\]
	Since we are assuming $N\leq \min(j,n-r-i)$,
	\[ \prod_{p=1}^k \overline{s}_{\mu^{(p)}}\in \mathrm{Span}_\mathbb{Z}\left\{\overline{s}_\theta\mid \theta\subseteq (n-r-i)^j\right\},\]
	say
	\[\prod_{p=1}^k \overline{s}_{\mu^{(p)}} = \sum_{\substack{\theta\subseteq (n-r-i)^j \\|\theta|=N-|\lambda|}} d_{\theta}\overline{s}_{\theta}. \]
	Then if we expand, we obtain
	\begin{align*}
		\xi^{(m)}-\xi^{(m-1)} 
		&=-\gamma\mathrm{CP}(\overline{s}_{\lambda}\otimes (\overline{s}_{\mu^{(1)}}\cdots \overline{s}_{\mu^{(k)}}))\\
		&=-\gamma\mathrm{CP}\biggr(\overline{s}_{\lambda}\otimes \sum_{\substack{\theta\subseteq (n-r-i)^j \\|\theta|=N-|\lambda|}} d_{\theta}\overline{s}_{\theta} \biggr)\\
		&=-\gamma \sum_{\substack{\theta\subseteq (n-r-i)^j \\|\theta|=N-|\lambda|}} d_{\theta}\mathrm{CP}(\overline{s}_{\lambda}\otimes \overline{s}_{\theta}).
	\end{align*}
	Since $\lambda\subseteq \nu_B \subseteq b^a)$, each term $\mathrm{CP}(\overline{s}_{\lambda}\otimes \overline{s}_{\theta})$ is of the form $\mathrm{CP}(\overline{s}_{\rho_B}\otimes \overline{s}_{\rho_R})$ with $\rho\in\decomp_{\varphi}$ wide, it follows that $\xi^{(m)}-\xi^{(m-1)} \in T(W)$.
	\end{proof}

\begin{lemma}
	\label{lem:restricting-equations}
	Suppose $\varphi=(n,r,i,j,a,b,N)$ be valid. Set $\widehat{\varphi}=(\widehat{n},\widehat{r},i,\widehat{j},a,b,N)$ where $\widehat{n}=4^qn$, $\widehat{r}=2^q r$, and $\widehat{j}=2^q j$ for $q\in {\mathbb N}$. Fix any $\nu\in \decomp_{\varphi}$, and let $\widehat{\nu}\in\decomp_{\widehat{\varphi}}$ be the partition with
	\[\widehat{\nu}_B=\nu_B \quad\mbox{and}\quad \widehat{\nu}_R=\nu_R.\]
	Then setting the variables 
	\[\{A_{\lambda,\theta} \mid \lambda\nsubseteq b^a \mbox{ or } \theta\nsubseteq (n-r-i)^j \}\] 
	to zero in $\linear_{\widehat{\varphi}}(\widehat{\nu})$ yields exactly $\linear_{\varphi}(\nu)$.
\end{lemma}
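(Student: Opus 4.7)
My plan is to prove the lemma by comparing the $A$-coefficients of the two linear forms on each variable that survives the substitution. For any pair $(\lambda,\theta)$ with $\lambda\subseteq b^a$ and $\theta\subseteq (n-r-i)^j$, the coefficient of $A_{\lambda,\theta}$ in the substituted $\linear_{\widehat\varphi}(\widehat\nu)$ is $c_{(i^{\widehat j},\lambda),\theta}^{\widehat\nu}$, while in $\linear_\varphi(\nu)$ it is $c_{(i^j,\lambda),\theta}^\nu$. So the lemma reduces to an identity between these two Littlewood--Richardson coefficients together with a check that the substitution has not thrown away any terms of $\linear_\varphi(\nu)$.

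The main tool I would use is a disconnected skew Schur factorization. Since $\nu\in\decomp_\varphi$ and $\lambda\subseteq\nu_B$, the skew diagram $\nu/(i^j,\lambda)$ splits into two pieces that share no row and no column: the top piece is (after translation) the straight shape $\nu_R$, and the bottom piece is the skew shape $\nu_B/\lambda$. Consequently
\[ s_{\nu/(i^j,\lambda)} = s_{\nu_R}\cdot s_{\nu_B/\lambda} = \sum_\mu c_{\lambda,\mu}^{\nu_B}\, s_{\nu_R}\, s_\mu, \]
and extracting the coefficient of $s_\theta$ yields
\[ c_{(i^j,\lambda),\theta}^\nu \;=\; \sum_\mu c_{\lambda,\mu}^{\nu_B}\,c_{\nu_R,\mu}^\theta. \]
The right-hand side depends only on $\lambda,\theta,\nu_B,\nu_R$, and is independent of the height $j$ (which enters only through the requirement $(i^j,\lambda)\subseteq\nu$, which holds automatically). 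Running the same derivation in $\widehat\varphi$ and using $\widehat\nu_B=\nu_B$, $\widehat\nu_R=\nu_R$ would give exactly the same expression for $c_{(i^{\widehat j},\lambda),\theta}^{\widehat\nu}$, hence the two coefficients coincide.

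The last step is bookkeeping, to confirm that the substitution specified in the lemma precisely isolates the surviving variables, and that no genuine term of $\linear_\varphi(\nu)$ is lost in the process. The main obstacle I anticipate is verifying the support claim: for $\lambda\subseteq b^a$, the coefficient $c_{(i^j,\lambda),\theta}^\nu$ is nonzero only when $\theta\subseteq (n-r-i)^j$. I would extract this from the factorization above together with the structural bounds $\mu\subseteq\nu_B\subseteq b^a$ and $\nu_R\subseteq (n-r-i)^j$, the degree identity $|\theta|=N-|\lambda|$, and the Littlewood--Richardson shape inequalities $\theta_1\le \nu_{R,1}+\mu_1$ and $\ell(\theta)\le \ell(\nu_R)+\ell(\mu)$. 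Once that support bound is established, the coefficient identity from the second paragraph matches the two linear forms term by term on the common index set, and setting the remaining variables of $\linear_{\widehat\varphi}(\widehat\nu)$ to zero recovers $\linear_\varphi(\nu)$ exactly.
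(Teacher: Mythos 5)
Your second paragraph is correct and is, in expanded form, exactly the paper's own proof: the paper's argument consists of the single observation that $c_{(i^j,\lambda),\theta}^{\nu}=c_{(i^{\widehat{j}},\lambda),\theta}^{\widehat{\nu}}$ because the skew shapes $\nu/(i^j,\lambda)$ and $\widehat{\nu}/(i^{\widehat{j}},\lambda)$ are essentially the same, and your disconnected factorization $s_{\nu/(i^j,\lambda)}=s_{\nu_R}\,s_{\nu_B/\lambda}$, giving $c_{(i^j,\lambda),\theta}^{\nu}=\sum_{\mu}c_{\lambda,\mu}^{\nu_B}c_{\nu_R,\mu}^{\theta}$ with a right-hand side depending only on $\nu_B,\nu_R,\lambda,\theta$, is a clean way to make that precise (compare the wide case of Proposition \ref{prop:eq-to-tensors}).

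The genuine gap is the ``support claim'' in your last paragraph, and it cannot be repaired by sharper estimates: it is false, not merely unproved. The bounds you cite only yield $\theta_1\le \nu_{R,1}+\mu_1\le (n-r-i)+b$ and $\ell(\theta)\le \ell(\nu_R)+\ell(\mu)\le j+a$, which is strictly weaker than $\theta\subseteq(n-r-i)^j$. Concretely, with the parameters of Example \ref{exp:equations}, $(n,r,i,j,a,b,N)=(12,6,3,3,3,3,4)$, the wide decomposable partition $\nu=(4,3,3,3)$ has $\nu_R=(1)$ and $\nu_B=(3)$, and for $\lambda=\emptyset$ one gets $s_{\nu/(i^j)}=s_{1}s_{3}=s_{4}+s_{31}$, so $c_{(3^3),(4)}^{(4,3,3,3)}=1$ although $(4)\nsubseteq(n-r-i)^j=3^3$. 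Hence $A_{\emptyset,(4)}$ occurs in $\linear_{\varphi}(\nu)$ (and in $\linear_{\widehat{\varphi}}(\widehat{\nu})$) but is killed by the prescribed substitution, so the term-by-term exactness over the box $(n-r-i)^j$ that you plan to verify in your bookkeeping step simply does not hold for wide $\nu$ (nor for tall $\nu$ once $a>j$ or $b>n-r-i$). What your coefficient identity does prove verbatim is the statement in which the variables set to zero are those with $\theta\nsubseteq(n-r)^r$, i.e.\ the variables of the hatted system that are absent from the unhatted one: then the surviving range of $\theta$ in $\linear_{\widehat{\varphi}}(\widehat{\nu})$ coincides with the range occurring in $\linear_{\varphi}(\nu)$, and the two linear forms agree coefficient by coefficient. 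Note that the paper's own proof consists only of the coefficient identity and is silent on the support question you raise, so your first two paragraphs already recover it; the final step of your plan, however, would fail as written.
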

\begin{proof}
	It suffices to note that 
	\[c_{(i^j,\lambda),\theta}^{\nu}= c_{(i^{\widehat{j}}\lambda),\theta}^{\widehat{\nu}}\]
	by Theorem \ref{thm:pictures}, since the skew shapes $\nu/(i^j,\lambda)$ and $\widehat{\nu}/(i^{\widehat{j}}\lambda)$ are (essentially) the same.
\end{proof}

\begin{theorem}
	\label{thm:inconsistent}
	For any valid $\varphi=(n,r,i,j,a,b,N)$, the equations 
	\begin{equation}
		\label{eqn:all-equations}
		\left\{ \linear_{\varphi}(\nu) = d_{\nu}\mid \nu\in \decomp_{\varphi} \right\}
	\end{equation}
	are inconsistent whenever $d_{\nu}=0$ for all wide $\nu$, and $d_{\rho}\neq 0$ for some tall $\rho$.
\end{theorem}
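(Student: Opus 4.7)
My plan is to derive a contradiction from an assumed consistent solution by lifting it to an inflated system $\widehat\varphi$ in which the hypothesis of Lemma~\ref{lem:tall-dependent-on-wide} holds. Suppose $\{A_{\lambda,\theta}^{(0)}\}$ solves (\ref{eqn:all-equations}) with $d_\nu=0$ for every wide $\nu$ and $d_\rho\neq 0$ for some tall $\rho\in\decomp_\varphi$.

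First I would choose $q\in\mathbb N$ large enough that $\widehat\varphi=(4^qn,2^qr,i,2^qj,a,b,N)$ is valid and satisfies $N\leq\min(2^qj,\,4^qn-2^qr-i)$; this is possible because both quantities grow unboundedly in $q$, so Lemma~\ref{lem:tall-dependent-on-wide} becomes applicable to $\widehat\varphi$. I would then lift the candidate solution by setting $\widehat A^{(0)}_{\lambda,\theta}=A^{(0)}_{\lambda,\theta}$ when $\lambda\subseteq b^a$ and $\theta\subseteq(n-r-i)^j$, and $\widehat A^{(0)}_{\lambda,\theta}=0$ otherwise. For every $\sigma\in\decomp_\varphi$, writing $\widehat\sigma\in\decomp_{\widehat\varphi}$ for its lift (with $\widehat\sigma_B=\sigma_B$, $\widehat\sigma_R=\sigma_R$), Lemma~\ref{lem:restricting-equations} yields
\[\linear_{\widehat\varphi}(\widehat\sigma)\big|_{\widehat A^{(0)}}=\linear_\varphi(\sigma)\big|_{A^{(0)}}=d_\sigma.\]

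The technical heart of the argument is to show that for any wide $\widehat\sigma\in\decomp_{\widehat\varphi}$ whose right part $\widehat\sigma_R$ does \emph{not} satisfy $\widehat\sigma_R\subseteq(n-r-i)^j$ (so $\widehat\sigma$ is not the lift of any element of $\decomp_\varphi$), one still has $\linear_{\widehat\varphi}(\widehat\sigma)|_{\widehat A^{(0)}}=0$. It suffices to prove $c^{\widehat\sigma}_{(i^{\widehat j},\lambda),\theta}=0$ for all $\lambda\subseteq b^a$ and $\theta\subseteq(n-r-i)^j$. If $(\widehat\sigma_R)_1>n-r-i$, the standard fact that row~$1$ of any Littlewood--Richardson tableau consists entirely of $1$'s forces $\theta_1\geq(\widehat\sigma_R)_1>n-r-i$, a contradiction. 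If instead $\ell(\widehat\sigma_R)>j$, then column $i+1$ of the skew shape $\widehat\sigma/(i^{\widehat j},\lambda)$ contains exactly $\ell(\widehat\sigma_R)$ boxes---no cell of $(i^{\widehat j},\lambda)$ lies in column $i+1$ since $\lambda\subseteq b^a\subseteq i^a$---and the strict increase of LR entries down this column forces $\ell(\theta)\geq\ell(\widehat\sigma_R)>j$, again impossible. I expect this step to be the main obstacle, as it is the only place where genuinely new combinatorics is invoked; the rest of the proof is essentially bookkeeping.

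With these ingredients in place, the contradiction is immediate. Applying Lemma~\ref{lem:tall-dependent-on-wide} to the lift $\widehat\rho$ of $\rho$ yields an integer combination $\linear_{\widehat\varphi}(\widehat\rho)=\sum_{\widehat\sigma\text{ wide}}c_{\widehat\sigma}\linear_{\widehat\varphi}(\widehat\sigma)$. Evaluating at $\widehat A^{(0)}$, every term on the right vanishes: if $\widehat\sigma$ is the lift of some wide $\sigma\in\decomp_\varphi$ the contribution is $d_\sigma=0$ by hypothesis, while otherwise the contribution is $0$ by the preceding paragraph. Yet the left-hand side evaluates to $d_\rho\neq 0$ by the lifting identity, giving the desired contradiction.
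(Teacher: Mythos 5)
Your proof is correct, and its skeleton is the paper's: inflate $\varphi$ to $\widehat\varphi=(4^qn,2^qr,i,2^qj,a,b,N)$ so that $N\leq\min(\widehat j,\widehat n-\widehat r-i)$, invoke Lemma~\ref{lem:tall-dependent-on-wide} there, and transfer back via Lemma~\ref{lem:restricting-equations}. Where you genuinely diverge is in how the transfer is justified. The paper sets the extra variables to zero in the whole inflated system and asserts that this ``produces the same,'' i.e.\ yields (\ref{eqn:all-equations}); but the restricted system a priori contains additional equations, namely the restrictions of $\linear_{\widehat\varphi}(\tau)=0$ for wide $\tau\in\decomp_{\widehat\varphi}$ that are not lifts of elements of $\decomp_{\varphi}$, and an inconsistent system with extra equations does not by itself force its subsystem (\ref{eqn:all-equations}) to be inconsistent. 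You supply exactly the missing verification: for such non-lifted $\tau$ (equivalently $\tau_R\not\subseteq(n-r-i)^j$, and these are automatically wide), every coefficient $c^{\tau}_{(i^{\widehat j},\lambda),\theta}$ with $\lambda\subseteq b^a$, $\theta\subseteq(n-r-i)^j$ vanishes --- your two Littlewood--Richardson arguments (all entries of the first skew row are $1$'s, forcing $\theta_1\geq(\tau_R)_1$; and the column of length $\ell(\tau_R)$ in column $i+1$, forcing $\ell(\theta)\geq\ell(\tau_R)$) are both correct, using that $\lambda\subseteq b^a\subseteq i^a$ keeps the inner shape out of column $i+1$. With that vanishing, your evaluation of the integer dependence from Lemma~\ref{lem:tall-dependent-on-wide} at the zero-extended solution gives $d_\rho=0$, the desired contradiction; you also handle both of the paper's cases uniformly by allowing $q=0$. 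So your route buys a fully explicit transfer-of-inconsistency step (the restricted non-lift equations are literally $0=0$), at the cost of one extra combinatorial lemma; the paper's proof is shorter but leans on this point being implicit. One tiny detail you could make explicit is that $\widehat\sigma_R\subseteq(n-r-i)^j$ (together with $\widehat\sigma_B\subseteq b^a$) really does produce an element of $\decomp_{\varphi}$, so ``non-lift'' is exactly the condition you treat; this is a routine check from $a+j\leq r$, $a+i\leq n-r$, $b\leq i,j$.
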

\begin{proof}
	Suppose first that 
	\[N\leq \min(j,n-r-i).\] By Lemma \ref{lem:tall-dependent-on-wide}, we have $\linear_{\varphi}(\rho) \in W$.
	The restriction $d_\nu=0$ when $\nu$ is wide would then force $d_{\rho}=0$, contradicting the hypothesis. Thus the equations (\ref{eqn:all-equations}) are inconsistent.
	
	Now suppose that 
	\[N>\min(j,n-r-i).\] 
	Choose $q$ so that $N\leq \min(2^qj,4^qn-2^qr-i)$. Let 
	\[\widehat{\varphi} = (\widehat{n},\widehat{r},i,\widehat{j},a,b,N),\quad \mbox{where} \quad \widehat{n}=4^qn,\quad \widehat{r}=2^qr,\quad\mbox{and}\quad \widehat{j}=2^qj.\]
	For each $\nu\in \decomp_{\varphi}$, let $\widehat{\nu}\in\decomp_{\widehat{\varphi}}$ be the partition with $\widehat{\nu}_B=\nu_B$ and $\widehat{\nu}_R=\nu_R$. For $\tau\in\decomp_{\widehat{\varphi}}$, define
	\[e_{\tau} = \begin{cases}
		d_\nu &\mbox{ if } \tau=\widehat{\nu} \mbox{ for some } \nu\in\decomp_{\varphi},\\
		0 &\mbox{ otherwise.}
	\end{cases} \]

	By the previous argument, the equations
	\begin{equation}
		\label{eqn:bigger_m}
		\left\{ \linear_{\widehat{\varphi}}(\tau) = e_{\tau}\mid \tau\in \decomp_{\widehat{\varphi}} \right\}
	\end{equation}
	are inconsistent. By Lemma \ref{lem:restricting-equations}, the equations (\ref{eqn:all-equations}) can be obtained by setting certain variables to zero in (\ref{eqn:bigger_m}). Setting variables to zero in an inconsistent linear system produces the same.
\end{proof}

\begin{theorem}
	For any valid $\varphi=(n,r,i,j,a,b,N)$, there is no syzygy of the form 
	\[\sum_{\substack{\lambda\subseteq b^a\\|\lambda|=N}}\overline{s}_{(i^j,\lambda)}f_{\lambda}=\sum_{\substack{\lambda\subseteq b^a\\|\lambda|<N }}f_{\lambda}\overline{s}_{(i^j,\lambda)}\]
	with the left-hand side nonzero. 
\end{theorem}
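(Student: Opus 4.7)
My plan is to derive this theorem as an immediate contrapositive consequence of Lemma \ref{lem:implication-of-syzygy} combined with Theorem \ref{thm:inconsistent}. I will assume for contradiction that such a syzygy exists, then extract from it precisely the data needed to trigger the inconsistency of Theorem \ref{thm:inconsistent}.

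The first step is to exploit homogeneity: under the grading of (\ref{eqn:syzygy}), each $f_\lambda$ with $|\lambda|=N$ is a homogeneous element of degree $0$, hence an integer, and the left-hand side is a $\mathbb{Z}$-linear combination of the Schur classes $\overline{s}_{(i^j,\lambda)}$ for $\lambda\subseteq b^a$ with $|\lambda|=N$. The validity inequalities $a+i\leq n-r$ and $a+j\leq r$ force each such $(i^j,\lambda)$ into the box $(n-r)^r$, so these classes form part of the Schur $\mathbb{Z}$-basis of $\overline{\Lambda}$ and are linearly independent. The nonvanishing hypothesis thus produces some $\lambda_0\subseteq b^a$ with $|\lambda_0|=N$ and $f_{\lambda_0}\neq 0$.

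Next I will package this witness into the input of Theorem \ref{thm:inconsistent}. Setting $\nu:=(i^j,\lambda_0)$, a quick check using $\lambda_0\subseteq b^a$ together with validity confirms that $\nu\in \decomp_{\varphi}$ and that $\nu$ is tall, with $\nu_B=\lambda_0$ and $\nu_R$ empty. Lemma \ref{lem:implication-of-syzygy} then yields a consistent system of decomposable equations in which $\chi(\mu)=0$ for every wide $\mu$ and $\chi(\nu)=f_{\nu_B}=f_{\lambda_0}\neq 0$ for our tall $\nu$. But Theorem \ref{thm:inconsistent} declares precisely this configuration of right-hand sides to be inconsistent, contradicting the consistency just obtained.

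There is no genuine obstacle at this stage: the entire analytic content has been front-loaded into Theorem \ref{thm:inconsistent} via the Hopf-algebra reduction of Section \ref{sec:Hopf}. The only care required is the elementary bookkeeping to verify that $(i^j,\lambda_0)$ lies in $\decomp_{\varphi}$ and is tall, which follows directly from the validity inequalities $b\leq i$, $a+j\leq r$, and $a+i\leq n-r$.
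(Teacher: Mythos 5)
Your proposal is correct and follows essentially the same route as the paper: the paper's proof is exactly the combination of Lemma \ref{lem:implication-of-syzygy} with Theorem \ref{thm:inconsistent}, and your additional bookkeeping (extracting an integer $f_{\lambda_0}\neq 0$ from the nonzero left-hand side via linear independence of the $\overline{s}_{(i^j,\lambda)}$, and checking that $(i^j,\lambda_0)$ is tall and decomposable) is just the routine verification the paper leaves implicit.
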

\begin{proof}
	By Lemma \ref{lem:implication-of-syzygy}, the existence of such a syzygy would contradict Theorem \ref{thm:inconsistent}.
\end{proof}

\begin{corollary}
	\label{cor:no-syzygy}
	For any bigrassmannian $v=v_{r,s,t,n}\in S_n$ and $i,j,a,b$ as in Theorem \ref{thm:main}, the generating set
	\[\{\overline{s}_{\mu} \mid i^j\subseteq \mu \subseteq (i^j,b^a)\}\]
	of $J_v$ in $\overline{\Lambda}$ is minimal.
\end{corollary}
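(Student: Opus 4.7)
The plan is to conclude minimality immediately from the preceding theorem via the translation already laid out in Section~\ref{sec:theequations}. I would argue by contradiction: suppose the generating set is not minimal, so some generator $\overline{s}_{(i^j,\rho)}$ with $\rho\subseteq b^a$ lies in the $\overline{\Lambda}$-ideal generated by the remaining $\overline{s}_{(i^j,\lambda)}$, $\lambda\neq\rho$. This yields a relation of exactly the form (\ref{eqn:goal-syzygy}).

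I would first dispose of two small edge cases. If $i=n-r$ then $a=0$, so the generating set is the single class $\overline{s}_{(i^j)}$, which is nonzero since $(i^j)\subseteq (n-r)^r$, and minimality is vacuous. If $a\geq 1$ but $N=|\rho|=0$ (so $\rho=\emptyset$), then the left-hand side of (\ref{eqn:goal-syzygy}) is a nonzero class of degree $ij$, while every term on the right has strictly greater degree; since $\overline{\Lambda}$ is graded, this is impossible. Thus we may assume $a\geq 1$ and $N\geq 1$.

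In the main case I would extract the degree-$(ij+N)$ homogeneous component of (\ref{eqn:goal-syzygy}) exactly as in the derivation of (\ref{eqn:syzygy}), producing a homogeneous syzygy of the form appearing in the preceding theorem. The term $\overline{s}_{(i^j,\rho)}$ appears on the left-hand side with coefficient $1$, and since the Schur classes $\overline{s}_\mu$ with $\mu\subseteq (n-r)^r$ are $\mathbb{Z}$-linearly independent in $\overline{\Lambda}$, the left-hand side is genuinely nonzero. It then remains to check that $\varphi=(n,r,i,j,a,b,N)$ is valid: the inequalities $r<n,\ a+j\leq r,\ a+i\leq n-r,\ b\leq i,\ b\leq j$ are built into the definitions of $i,j,a,b$ in Theorem~\ref{thm:main}, and $N\leq ab$ follows from $\rho\subseteq b^a$. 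The preceding theorem then rules out the syzygy, giving the desired contradiction. The only real subtlety is the bookkeeping in isolating the correct homogeneous component and confirming that no hidden cancellation kills the $\overline{s}_{(i^j,\rho)}$ coefficient; once the Schur linear independence in $\overline{\Lambda}$ is noted, everything else reduces to a direct invocation of the preceding theorem.
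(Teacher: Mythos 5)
Your proposal is correct and follows essentially the same route as the paper: reduce to ruling out a relation of the form (\ref{eqn:goal-syzygy}), dispose of the degenerate cases, extract the homogeneous degree-$(ij+N)$ component as in the derivation of (\ref{eqn:syzygy}), use $\mathbb{Z}$-linear independence of the Schur classes $\overline{s}_\mu$, $\mu\subseteq(n-r)^r$, to see the left-hand side is nonzero, check validity of $\varphi=(n,r,i,j,a,b,N)$, and invoke the preceding no-syzygy theorem. The only cosmetic point is that $a=0$ can also occur when $r=j$ (i.e.\ $t=1$), not only when $i=n-r$, but then $\rho=\emptyset$ is forced and your single-generator/degree argument still disposes of it, exactly as in the paper's own treatment of the trivial case.
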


\begin{remark}
	In \cite{RWY}, a transposed version of Theorem \ref{thm:main} is also proved (with minimality conjectured). The generating set is 
	\begin{equation}
		\label{eqn:transposedlist}
		\{\overline{s}_{\mu} \mid i^j\subseteq \mu \subseteq ((i+a)^b,i^{j-b})\}.
	\end{equation}
	These are exactly the conjugates of the generating set (\ref{eqn:minimallist}) whose minimality we prove. One could straightforwardly``transpose'' all arguments in this paper to prove minimality of (\ref{eqn:transposedlist}). Specifically, one would interchange the roles of wide and
	tall equations in Section~\ref{sec:theequations} (with minor modifications) and swap the order of the tensor components in Sections~\ref{sec:Hopf} and \ref{sec:conclusion}.
\end{remark}

\section*{Acknowledgements}
AY thanks Victor Reiner and Alexander Woo for their earlier joint collaboration \cite{RWY}. AY also thanks Alexander
Woo for a recent helpful conversation about the minimality conjecture. AS was partially supported by an NSF postdoctoral fellowship.
AY was partially supported by a Simons Collaboration Grant and an NSF RTG grant in combinatorics.

\end{document}